\theoremstyle{plain}
\newtheorem{theorem}{Theorem}[section]
\newtheorem{proposition}{Proposition}
\newtheorem{lemma}{Lemma}
\newtheorem{corollary}{Corollary}
\theoremstyle{definition}
\newtheorem{definition}{Definition}
\theoremstyle{remark}
\newcommand{\cA}{{\mathcal{A}}}
\newcommand{\cO}{{\mathcal{O}}}   \newcommand{\cP}{{\mathcal{P}}}
   \newcommand{\cT}{{\mathcal{T}}}
\newcommand{\cU}{{\mathcal{U}}}
\newcommand\N{\ensuremath{\mathbb{N}}}
\newcommand\R{\ensuremath{\mathbb{R}}}
\newcommand\Z{\ensuremath{\mathbb{Z}}}
\newcommand\Q{\ensuremath{\mathbb{Q}}}
\newcommand\C{\ensuremath{\mathbb{C}}}
\renewcommand{\phi}{\varphi}
\newcommand{\st}{%
  \nonscript\;
  \ifnum\currentgrouptype=16
    \;\middle|\;
  \else
    \;|\;
  \fi
  \nonscript\;}
\newcommand{\abs}[1]{\left| #1 \right|}
\newcommand{\inner}[2]{\left\langle #1, #2 \right\rangle}
\newcommand{\ds}{\displaystyle}
\DeclareMathOperator{\sgn}{sgn}
\let\oldend\endlinechar
\renewcommand{\endlinechar}{\oldend}
\newcommand{\divides}{\mathbin{|}}
\newcommand{\set}[1]{\ensuremath{\left\{#1\right\}}}
\newcommand{\sett}{\coloneqq}
\renewcommand{\epsilon}{\varepsilon}
\newcommand{\fa}{~\forall~}
\newcommand{\sint}{\sin\theta}
\newcommand{\cost}{\cos\theta}
\newcommand{\lp}{\left(}
\newcommand{\rp}{\right)}
\newcommand{\pa}[1]{\lp#1\rp}
\newcommand{\norm}[2]{
\left\lVert #1 \right\rVert_{#2}
}
\newcommand{\informsproof}{}
\newcommand{\informsendofproof}{}
\title{Temporal Robustness in Discrete Time Linear Dynamical Systems}
\author{% 
  Nilava Metya\\
  %Department of Mathematics\\
  Rutgers University\\
  New Brunswick, NJ \\
  \texttt{nilava.metya@rutgers.edu} \\
  \and
  Ankit Shah\\
  Indiana University\\
  Bloomington, Indiana\\
  \texttt{ankit@iu.edu}\\
  \and 
  Arunesh Sinha\\
  %Department of Management Science and Information Systems\\
  Rutgers University\\
  New Brunswick, NJ \\
  \texttt{arunesh.sinha@rutgers.edu} \\
  % examples of more authors
  % \And
  % Coauthor \\
  % Affiliation \\
  % Address \\
  % \texttt{email} \\
  % \AND
  % Coauthor \\
  % Affiliation \\
  % Address \\
  % \texttt{email} \\
  % \And
  % Coauthor \\
  % Affiliation \\
  % Address \\
  % \texttt{email} \\
  % \And
  % Coauthor \\
  % Affiliation \\
  % Address \\
  % \texttt{email} \\
}
\date{}
\begin{document}

\maketitle

\begin{abstract}
Discrete time linear dynamical systems, including Markov chains, have found many applications including in security settings such as in cybersecurity operations center (CSOC) management and in managing health risks. However, in these two scenarios, there is uncertainty about the time horizon for which the system runs. This creates uncertainty about the cost (or reward) incurred based on the state distribution when the system stops. Given past data samples of how long a system ran, we theoretically analyze the cost incurred at the stop of the system as a distributional robust cost estimation task in a Wasserstein ambiguity set. Towards this, we show an equivalence between a discrete time Markov Chain on a probability simplex and a global asymptotic stable (GAS) discrete time linear dynamical system, allowing us to base our study on a GAS system only. Then, we provide various polynomial time algorithms and hardness results for different cases in our theoretical study, including a novel proof of a fundamental result about Wassertein distance based polytope. We experiment with real world data in CSOC domain and prior data in health domain to reveal the benefits of our model and approach.
\end{abstract}

\section{Introduction}
\label{introduction}
Discrete time linear dynamical systems, prominently Markov chains, have found many uses in machine learning \citep{hao2018learning} as well as other sciences~\citep{husic2018markov}, with applications in social networks~\citep{yang2011like}, health~\citep{yaesoubi2011generalized,sato2010markov}, and various other domains. There is a huge body of work on understanding the properties of such models~\citep{howard2012dynamic}, including asymptotic behavior and convergence rate analysis. Yet, in many applications the challenge lies not only in characterizing long-run properties, but also in reasoning about system performance within a fixed wall-clock horizon, where the effective number of discrete steps is itself uncertain due to variability in human behavior, external conditions, or contextual factors. \textcolor{black}{Addressing this horizon uncertainty is crucial for robust short-term prediction, risk estimation, and cost assessment, particularly in domains central to national and human security, such as cybersecurity operations and public health. We illustrate this challenge in both contexts below.} %Addressing this form of horizon uncertainty is essential for making robust short-term predictions and cost assessments within a fixed time horizon in domains such as cybersecurity operations and health economics. We elaborate on this type of uncertainty in these two domains below.

A Cybersecurity Operations Center (CSOC) is a centralized facility where analysts continuously monitor, investigate, and respond to security alerts. A CSOC is critically important as the frontline defense for protecting organizational assets, ensuring resilience against cyber threats, and maintaining trust in digital infrastructure. Most large organizations, including national defense agencies, operate CSOCs as integral components of their security infrastructure. %run a CSOC. In CSOCs, analysts face a stream of incoming alerts generated by intrusion detection systems, which has been modeled as a Markov chain where the state corresponds to the number of alerts in an analyst’s queue.
Within these centers, analysts contend with a continuous stream of alerts \textcolor{black}{generated by intrusion detection systems. This alert-handling process} has been modeled as a Markov chain, where each state corresponds to the number of alerts in an analyst’s queue. During the regular work shift, alerts arrive at rate $\lambda$ (of a Poisson process) and are processed at rate $\mu$ (mean of a uniform distribution)~\citep{shah2018methodology, 10.1145/3173457, 10.1145/3377554}, producing a transition matrix $M’$ over queue lengths. After a standard workday shift (e.g., eight hours, 960 steps of 30 seconds each), the system enters an \emph{overtime Markov chain} given by $M$, where arrivals stop (since new alerts are assigned to the next shift), but the backlog continues to be processed. The stop time of this overtime chain is uncertain, depending on external factors such as analyst availability, for which some past data may be available. From past data on overtime behavior, one can estimate a nominal but uncertain probability distribution over the possible stopping times. At termination, the cost depends jointly on the backlog distribution and the number of overtime steps, creating a direct need for robust estimation under uncertain time horizons to ensure resilient and cost-effective cybersecurity operations.

%\textcolor{red}{Arunesh, should you consider it calling health security to align with the security area? Something like ... ""}
A similar phenomenon arises in health security, where stochastic models, such as Markov chains, of infectious disease spread, such as those used during a pandemic (natural-disaster-driven threats disrupting access to healthcare, food, and energy resources), inform resource allocation and resilience planning~\citep{10.1093/oso/9780192896087.003.0022,yaesoubi2011generalized,dudel2020estimating}. For example, in an SIR (Susceptible-Infected-Recovered) model, the health state of each individual evolves via a transition matrix $M$ over a time step, where the time step is chosen by domain experts as a time interval of length $l$. An important consideration is what time length $l$ should be chosen for as a time step~\citep{yaesoubi2011generalized}. Often, this time length $l$ is fixed by experts (e.g., one day) and $M$ estimated from repeated observation over these time intervals of length $l$. Thus, $M$ captures the effect of ``average'' interaction (that enables disease spread) over this time interval. However, when applied for any population, the intensity of interaction can vary depending on various human factors such as norms (masking, etc.) and social events at that time. This has an effect that $M$ might describe interaction over a time interval of length $l'$ where $l'$ is a random quantity dependent on group behavior. This results in a problem where the number of timesteps in a given time frame (e.g., one week) becomes uncertain, for which some data might be available from domain expert inputs. To handle the uncertainty, one may be interested in the worst case state of disease spread in a week in order to robustly estimate the cost of disease spread and make critical decisions about social distancing or masking. 

These examples illustrate a broader methodological gap: there is a fundamental uncertainty in the number of time steps the system evolves. Existing approaches often introduce an absorbing state to model termination~\citep{freedman2012markov}, assuming a known probability distribution over horizon lengths. However, this assumes precise knowledge of the distribution, which may be unavailable or unreliable when only a \emph{finite number of samples} exist. In contrast, we aim for robustness by forming uncertainty or ambiguity sets based on prior data samples (or expert inputs) of how many timesteps a Markov chain could run.

%As far as we know, there is limited work studying the uncertainty of time horizon in discrete time linear dynamical systems. A typical approach to handle uncertainty is to introduce a single absorbing (or sink) state  to indicate the end of the evolution of the system~\cite{freedman2012markov}. However, such approaches assume a known probability distribution over time horizon, sometimes learned from data, but do not handle uncertainty in the knowledge of such probability distribution from finitely many samples. Instead, we aim for robustness by forming uncertainty or ambiguity sets based on prior data samples (or expert inputs) of how many timesteps Markov chain could run in a given time frame.

To address this, we propose a robustness framework for discrete-time linear dynamical systems with uncertain horizons.
Toward our goal, we first prove a fundamental result showing an equivalence between a discrete time Markov chain on a probability simplex and a \emph{global asymptotic stable} (GAS) discrete time linear dynamical system with the origin $\bm 0$ as fixed point. In particular, a GAS system satisfies some properties, such as Lyapunov stability (Definition~\ref{def:gas}). The convergence to $\bm 0$ allows for easier notation and proof for our theory results for a GAS system and the equivalence provides an all-encompassing framework applicable for Markov chains without loss of generality (WLOG).

Then, we specify a robust cost estimation problem in a GAS system with an uncertain time horizon but given data samples of the number of time steps from past runs. We formulate this as a distributional problem with Wasserstein ambiguity set around a nominal distribution formed using the given time step samples. We call this the distributional robust cost estimation \eqref{eq:dre} problem and a special case with unbounded ambiguity set is the robust cost estimation~\eqref{eq:re} problem. We consider two cases to solve: (1) \emph{finite support} of the nominal distribution and (2) \emph{infinite support} of the nominal distribution. We also show that our approach can jointly handle addtional uncertainty in the inital state.
%We provide polynomial time solutions for finite support and show hardness for infinite support with some non-polynomial time solutions and approximations. In doing so, we make a number of contributions. 
We list all our contributions next:
\begin{enumerate}
    \item A fundamental equivalence between a discrete time Markov Chain on a probability simplex and a GAS system.
    \item  A fast polynomial time algorithm, named \emph{Small and Big Strides}, (Algorithm~\ref{alg:robustfinite}) for evaluating the GAS state over finitely many time steps.
    \item A detailed proof of the structure of Wasserstein-1 polytope on the probability simplex, which combined with Algorithm~\ref{alg:robustfinite} results in a polynomial time solution for Wasserstein ambiguity set in the finite support case.
    \item  A NP hardness result for the infinite support case, and then a non-polynomial time algorithm for~\eqref{eq:re} problem and an approximation for the~\eqref{eq:dre} problem in this case.
    \item Experiments in a real world CSOC problem and in health security domain validate the usefulness of our model and approach.
\end{enumerate}

The rest of the paper is structured as follows: we situate the problem within relevant literature in Section~\ref{sec:related}; next, we provide some background, and basic definitions in Section~\ref{sec:background}; next, we establish the conversion of a Markov chain to a GAS system with commuting identities and state recovery in Section~\ref{sec:relation}; next, we formulate the {\sf DRCE} and {\sf RCE} problem in Section~\ref{sec:statement}; next, we we present the core of our paper with all the methodological results in Section~\ref{sec:approach}; next, we demonstrate scalability, and conservative robustness in CSOC security monitoring and SIR disease spread setting in Section~\ref{sec:experiments}; and we conclude with key takeaways and future directions in Section~\ref{sec:conclusion}. All missing and full proofs are in Appendix~\ref{sec:proofs}.

\section{Related Work}\label{sec:related}
\textit{Distributional Robustness}: A very widely studied topic in robust optimization literature~\citep{kuhn2025distributionally,xu2012distributional}, distributional robustness has recently found usage in machine learning~\citep{KuhnWasserstein,duchi2019variance,lee2018minimax}, with applications in adversarial learning~\citep{sinha2018certifying,moayeri2022explicit,kumarprovable,bai2024wasserstein}, robust decision making~\citep{staib2019distributionally,bose2022scalable},and reinforcement learning~\citep{liu2022distributionally,shi2024curious,zhangdistributionally}. Our work is situated in the same space but 
as far as we know this is a first work on distributional robustness for uncertain time horizon in linear dynamical systems. 
We do not solve a conventional distributional robust optimization~\citep{rahimian2022frameworks}, which also involves choosing an action (or decision, or control, or sometimes weights of a neural network) in an outer decision-making problem. We perform the inner distributional robust estimation part of a typical distributional robust optimization~, but in the challenging situation of uncertain time horizon for a dynamically evolving state distribution.
There are other works also that tackle the inner problem, and call it an evaluation of performance or risk~\citep{blanchet2019quantifying}; this work presents a general dual formulation of such problems. However, our focus here is on fast computational methods (considering the dynamics) and the dual form does not help in faster computation in our problem. Thus, this dual form is only a structural result; our detailed analysis of why it does not yield computational improvements, which requires the problem setup and notation, is deferred to the appendix. %However, the core contributions in this paper are major computational advancements for the problem at hand and those do not readily follow from the dual form.

\textit{Linear Dynamical Systems and Markov Chains}: There are many works in linear dynamical systems, some recent ones focus on learning such systems~\citep{hazan2018spectral,pmlr-v97-sarkar19a,pmlr-v202-bakshi23a,tyagi2023learning}. Markov chains have been used very widely, such as in choice modeling~\citep{blanchet2016markov}, in machine learning such as in MCMC approaches~\citep{jones2022markov}, and for our two problem domains as evident from citations in the introduction. There is also work on uncertainty in transition dynamics in Markov chains~\citep{rhee2023lyapunov}. However, as stated in the introduction, different from any work that we know of, we aim for distributional robustness to tackle the uncertainty in time horizon.

\section{Background and Preliminaries}
We present basic notations and background in linear algebra, which can be found in textbooks~\citep{meyer2023matrix}. We also present definitions for finite space and discrete time linear dynamical systems.

\textit{Notation}: $[n]$ denotes $\{1, \ldots, n\}$. We use bold notation, e.g., $\bm x$, to denote vectors. $\bm x^\top$ is the transpose and $\text{dim} (\bm x)$ is the dimension of vector $\bm x$.  $\bm e_i$ denotes the $i^{\text{th}}$ standard basis vector with $1$ at position $i$ and $0$ elsewhere (dimension of the ambient space will be clear from context or mentioned if required), and $\bm 1\sett\sum_i\bm e_i$. Denote $H_{n} \sett \set{\bm x\in \R^{n} \st \bm 1^\top \bm x=0}$, $\Delta_{n}\sett \set{\bm x\in\R^n\st x_i\ge 0\fa i\in[n], \bm 1^\top \bm x=1}$ the probability simplex and $\overline \Delta_{n}\sett \set{\bm x\in\R^n\st \bm 1^\top \bm x=1}$ the extended probability simplex which is the same as $H_{n}+\frac1n\bm 1$. 
%\newpage
For a set $S \subseteq \N$, a distribution on $S$ is a function $q : S\to [0,1]$ such that $\sum_{s\in S}q(s) = 1$. These are the same as vectors $\bm q\in \R^S$ with non-negative entries that sum to $1$. We will interchangeably use $q(s) = q_s$. The set of all distributions on $S$ is denoted as $\Delta(S)$. 
%Note that there is a bijection from $\Delta(S)$ to $\Delta_{n}$ whenever $\abs{S} = n$. 
$M^\top$ denotes matrix $M$ transpose and $M_{ij}$ is the entry in row $i$ and column $j$. 
%$\norm M2$ denotes the spectral norm of a matrix $M$. 
$\rho(M)$ is the spectral radius of $M$, i.e., $\rho(M) = \max\{|\lambda_1|, \ldots, |\lambda_n|\}$ for the $n$ eigenvalues $\lambda_i$'s. $I_n$ denotes the identity matrix of size $n \times n$. 
%A vector $\bm x\in \R^n$ will be written as $\bm x=(x_1,\cdots,x_n) = \begin{bmatrix}x_1\\\vdots\\x_n
%\end{bmatrix} = \begin{bmatrix}x_1&\cdots&x_n
%\end{bmatrix}^\top$. %Note the distinction between $(x_1,x_2,\cdots,x_n)$ and $\begin{bmatrix}x_1&x_2&\cdots&x_n \end{bmatrix}$.
Any complex number $a + ib$ can be written in Euler notation as $re^{i\theta} = r (\cos \theta + i \sin \theta)$, where $r = \sqrt{a^2 + b^2}\geq 0$ is the magnitude. Given non-zero $b$, we must have $\theta \in (0, \pi) \cup (\pi, 2\pi)$. 

\textit{Real Jordan normal form}: Any square matrix $M \in \R^{n\times n}$ can be written as $M = P J P^{-1}$ where $P \in \R^{n \times n}$, i.e., matrix with real valued entries, and $J$ is a real valued matrix in Jordan form as described next. First, the eigenvalues of $M$ can be complex or real valued. Complex eigenvalues appear in pairs, which we denote by $a+ib$ and $a-ib$. Suppose $M$ has $q$ pairs of complex eigenvalues and $p$ real eigenvalues ($q$ or $p$ could be $0$). Assume that the eigenvalues are distinct; if two eigenvalues are same, perturb one by an arbitrarily small amount. See Appendix~\ref{sec:misc} for details. Let the real eigenvalues be given by $\lambda_1 > \ldots > \lambda_p$. For each complex pair of eigenvalues $a_j \pm ib_j$, define the Jordan block: 
$\begin{bmatrix}
a_j & -b_j \\
b_j & a_j
\end{bmatrix}
$ or equivalently in the Euler notation as 
$r_j \begin{bmatrix}
\cos \theta_j & -\sin \theta_j \\
\sin \theta_j & \cos \theta_j
\end{bmatrix}
 = r_j J_j$. Then, the Jordan matrix $J$ is a \emph{block diagonal matrix} given as
\begin{equation}\label{jordan}
J = \text{diag}(r_1 J_1, \ldots,r_q J_q,  \lambda_1, \ldots, \lambda_p) \; ,\end{equation}
where $\text{diag}(\cdot)$ creates a square diagonal matrix with the inputs on the main diagonal and zeros everywhere else.
Matrix exponentiation takes a rather convenient form with Jordan form: $M^k = P J^k P^{-1}$, where $J^k$ has the same structure as $J$: $J^k = \text{diag}\left(r_1^k J_1^k, \ldots, \lambda_p^k\right)$. In particular, it is straightforward to check that $J_j^k = \begin{bmatrix}
\cos k\theta_j & -\sin k\theta_j \\
\sin k\theta_j & \cos k\theta_j
\end{bmatrix}$.

\textbf{Discrete Time Linear Dynamical Systems. } \label{sec:background}
A discrete-time dynamical system on a metric space $(X,d)$ is given by a function $f:X\to X$ which induces a sequence of points $\set{\bm x,f( \bm 
 x),f^2(\bm  x), \cdots}$ in $X$ starting from $\bm x$. A point $\bm  x^*\in X$ is said to be an \textit{equilibrium} or \textit{fixed} point of $f$ if $f(\bm x^*) = \bm x^*$. 

\begin{definition}[Globally Asymptotically Stable (GAS)]\label{def:gas}
An equilibrium point $\bm  x^*$ is said to be \textit{stable in the Lyapunov sense} for $f$ if for every $\epsilon>0$ there is some $\delta_\epsilon>0$ such that $d(\bm x,\bm x^*)<\delta_\epsilon$ implies $d(f^t(\bm x),\bm x^*) < \epsilon$ for each integer $t\ge 0$. An equilibrium point $\bm x^*$ is said to be \emph{globally asymptotically stable} (GAS) if it is Lyapunov stable and $\lim_{t\to\infty} d(f^t(\bm x),\bm x^*) = 0$ for every $\bm x\in X$.
\end{definition}
A special case of the above is a \emph{discrete-time linear dynamical system} (DTLDS), in which we will consider $X \subset \R^n$ and $f$ to be a linear map given by matrix $M \in\R^{n\times n}$ as $\bm x_{t+1} = M \bm x_t$.
It is clear that a GAS system cannot have two equilibrium points in $X$. Also, for any norm $\norm{\cdot}{}$ on $\R^n$ a DTLDS is GAS with $\bm x^*=\bm 0$ \emph{iff} every eigenvalue of $M$ has magnitude $<1$ ($\rho(M) < 1$). We will usually take $\norm{\cdot}{} = \norm{\cdot}{1}$ so that $d(\bm x,\bm y) = \norm{\bm x-\bm y}{1}$, unless stated otherwise.

One example of a GAS DTLDS which is \emph{not on} $\R^n$ but on the subspace $\Delta_{n}$ and is still described by a matrix is the well known concept of an \emph{Markov chain}. This system is described by a column stochastic matrix $M\in\R^{n\times n}$ with non-negative entries and each column summing to $1$, and additional properties for ergodicity~\citep{freedman2012markov}.
%such that for the weighted graph corresponding to $M$, any state can be reached from any other state with positive probability (irreducibility) and every state has a period of $1$ (aperiodic)~\cite{freedman2012markov}. 
It describes a dynamics on the probability simplex $\Delta_{n}$ because if $\bm x\in \Delta_{n}$ then $M\bm x \in \Delta_{n}$. It is known that if $M$ has only one eigenvalue equal to $1$ and other eigenvalues have magnitude strictly less than $1$ then $\exists! \;\bm \pi\in\Delta_n$ such that $M\bm \pi = \bm \pi$. This is the \textit{stationary distribution} of the Markov chain and for any $\bm x\in \overline{\Delta}_n$ (note, not just $\Delta_n$), $\lim_{t\to\infty}\norm{M^t\bm x-\bm \pi}1 = 0$.
In this work, we consider GAS DTLDS systems for the core problem stated in Section~\ref{sec:statement}. Our results apply to Markov chains WLOG, and we next relate GAS systems and Markov chains to support this claim.

\section{GAS System and Markov Chain Relation}\label{sec:relation}

Consider a Markov chain on $\Delta_{n}$ with transition matrix $M$ that has only one eigenvalue equal to $1$ and other eigenvalues have magnitude strictly less than $1$.
Pick $A\in \R^{(n-1)\times n}, B\in \R^{n\times (n-1)}$ with the properties listed below. Here, we treat $A$ and $B$ as a linear operators $A : \R^n \rightarrow \R^{n-1}$ and $B : \R^{n-1} \rightarrow \R^{n}$, and use the shorthand $A(X) = \{A(x)~|~ x \in X\}$ and $B(Y) = \{B(y)~|~ y \in Y\}$. Further, for any operator $F: X \rightarrow Y$, the restriction $F|_Z$ ($Z \subseteq X$) is the operator with restricted domain $F|_Z : Z \rightarrow Y$. The properties desired of $A,B$ are
\begin{itemize}
    \item $B(\R^{n-1}) = H_n$.
    \item $A|_{H_n}$ is an isomorphism onto $\R^{n-1}$.
    \item $AB = I_{n-1}, \left.(BA)\right|_{H_{n}} = \left.I\right|_{H_n}$.
\end{itemize}
Then, $M$ gives a GAS DTLDS on $\R^{n-1}$  determined by $\overline M = AMB \in\R^{(n-1)\times (n-1)}$. This construction can be viewed intuitively as moving between the probability simplex and $H_n$ using the linear operators $A,B$, which is natural but this result has not appeared in literature, as far as we know.
%Keeping in mind that $\set{\bm e_1-\bm e_2,\cdots,\bm e_{n-1}-\bm e_n}$ is a basis of $H_n$, we get that 
%$B=\begin{bmatrix}\bm e_1^\top\\ (\bm e_2-\bm e_1)^\top \\ (\bm e_3-\bm e_2)^\top \\ \vdots \\(\bm e_{n-1}-\bm e_{n-2})^\top \\ \bm -\bm e_{n-1}^\top\end{bmatrix}\in \R^{n\times (n-1)}$ and $\text{dim}( \bm e_i) = n-1 $; 
The above properties can yield many possible $A,B$, an example $A,B$ is as follows: $A \in \R^{(n-1)\times n}$ and $B \in \R^{n\times (n-1)} $ such that 
$$A_{ij} = \begin{cases}
1 & \text{for}\ j \leq i \\
0 & \text{otherwise}
\end{cases}
, \quad B=\begin{bmatrix}\bm 0^\top\\ 
-I_{n-1} \end{bmatrix} + \begin{bmatrix} I_{n-1} \\ \bm 0^\top \end{bmatrix} \; .
$$
The above three properties can be verified in a straightforward way for this example $A,B$.
%$A=\begin{bmatrix}\bm e_1^\top\\(\bm e_1+\bm e_2)^\top \\ \vdots\\(\bm e_1+\cdots+\bm e_{n-1})^\top\end{bmatrix}\in \R^{(n-1)\times n}$ for $\text{dim}( \bm e_i) = n $. 
We will next show that for any $\bm x_0 \in \Delta_{n-1}$, the sequence of states $\bm x_1 = M\bm x_0, \bm x_2 = M \bm x_1, \ldots$ converging to $\bm \pi$ is mirrored by the system $\overline M = AMB$. Before that, we show some structural relations.
\begin{theorem}[Structural Relation] \label{thm:structure}
For $A,B,M$ and $\overline{M}$ as defined above, we have
\begin{enumerate}
    \item $B \overline M = M B$.
    \item $\overline M A \bm z=A M \bm z$ for any $\bm z \in H_{n}$.
    \item $\overline M^t = A M^t B$.
\end{enumerate}
\end{theorem}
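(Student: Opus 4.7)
The plan is to first establish two elementary identities about $A$ and $B$ that capture the geometric picture: $B$ embeds $\R^{n-1}$ isomorphically onto the hyperplane $H_n$, and $A$ projects $\R^n$ back down, acting as a two-sided inverse of $B$ when restricted to $H_n$. Concretely, I would compute directly that $AB = I_{n-1}$: the $j$th column of $B$ is $\pmb e_j-\pmb e_{j+1}$, the $i$th row of $A$ is the indicator of $\{1,\dots,i\}$, so $(AB)_{ij} = A_{ij}-A_{i,j+1} = \delta_{ij}$. Next I would observe that the column space of $B$ equals $H_n$ (each column lies in $H_n$, and the $n-1$ columns are linearly independent in this $(n-1)$-dimensional space), and that $\ker A = \mathrm{span}(\pmb e_n)$ (from $\sum_{k\le i} z_k=0$ for all $i$).

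From these two facts I would deduce the key lemma: $BA\pmb z = \pmb z$ for every $\pmb z\in H_n$. Indeed, $A(BA\pmb z-\pmb z) = (AB)A\pmb z - A\pmb z = 0$, so $BA\pmb z - \pmb z\in \ker A = \mathrm{span}(\pmb e_n)$; but $BA\pmb z$ lies in the image of $B$, i.e.\ in $H_n$, and $\pmb z\in H_n$ too, so the difference lies in $H_n\cap\mathrm{span}(\pmb e_n)=\{\pmb 0\}$. Separately, since $M$ is column stochastic we have $\pmb 1^\top M=\pmb 1^\top$, so $M$ preserves $H_n$.

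With these preparations, all three claims become short. For (2), applying the key lemma gives $\overline M A\pmb z = AMB(A\pmb z) = AM(BA\pmb z) = AM\pmb z$ whenever $\pmb z\in H_n$. For (1), take any $\pmb y\in\R^{n-1}$; then $B\pmb y\in H_n$, hence $MB\pmb y\in H_n$, and applying the key lemma to $MB\pmb y$ yields $B\overline M\pmb y = BAMB\pmb y = (BA)(MB\pmb y) = MB\pmb y$. For (3), I would induct, or equivalently iterate (1): $B\overline M^{\,t} = (B\overline M)\overline M^{\,t-1} = MB\overline M^{\,t-1} = \cdots = M^t B$, and then left-multiplying by $A$ and using $AB=I_{n-1}$ gives $\overline M^{\,t} = AB\overline M^{\,t} = AM^t B$.

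The only delicate point is the lemma $BA|_{H_n}=I|_{H_n}$; it is tempting to try to verify it entrywise, but the cleaner argument is the one above using $AB=I_{n-1}$ together with the decomposition $\R^n = H_n\oplus\mathrm{span}(\pmb e_n)$. Once this is in place, the three claims are essentially a formal consequence of viewing $\overline M$ as the conjugate of $M|_{H_n}$ by the isomorphism $B\colon\R^{n-1}\to H_n$ with inverse $A|_{H_n}$.
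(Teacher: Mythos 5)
Your proposal is correct, and every step checks out: $AB=I_{n-1}$ follows from the entrywise computation you describe, $\ker A=\mathrm{span}(\pmb e_n)$ and $\mathrm{im}\,B=H_n$ are right, the key lemma $BA\pmb z=\pmb z$ for $\pmb z\in H_n$ follows from the direct-sum argument, and column stochasticity enters only through $\pmb 1^\top M=\pmb 1^\top$, which is exactly what is needed to see that $M$ preserves $H_n$. The paper reaches the same three identities by a more computational route: it writes $M=\begin{bmatrix}\tilde M&\pmb v\\ \pmb u^\top&c\end{bmatrix}$ in block form, computes $BA=\begin{bmatrix}I_{n-1}&\pmb 0\\-\pmb 1^\top&0\end{bmatrix}$ explicitly, and then evaluates $BAM$ and $MBA$ using the column-sum relations $-\pmb 1^\top\tilde M=(\pmb u-\pmb 1)^\top$ and $-\pmb 1^\top\pmb v=c-1$, proving (1) and (2) by separate block calculations before iterating (1) and invoking $AB=I_{n-1}$ (stated there without proof) to get (3), just as you do. Your organization is cleaner in two respects: the single lemma that $BA$ restricted to $H_n$ is the identity replaces both block computations, so (1) and (2) each become one line, and you actually prove $AB=I_{n-1}$ rather than assuming it; the price is that you need the small structural facts about $\mathrm{im}\,B$ and $\ker A$, which the paper's concrete calculation sidesteps. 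Your viewpoint of $\overline M$ as the conjugate of $M|_{H_n}$ under the isomorphism $B$ with inverse $A|_{H_n}$ is also implicit in the paper's commutative diagrams for property (3), so the two proofs are close in spirit, differing mainly in whether the intertwining identities are verified abstractly or by blockwise matrix arithmetic.
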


\begin{proof}{\informsproof}
\begin{subequations}\label{eq:comm-diags}
\noindent
\begin{minipage}[t]{0.43\linewidth}
  \vspace{0pt}
  % (a) B-commutes with \overline{M} and M on H_n
  \begin{equation}\label{diag:B}
  \begin{tikzcd}
    \R^{n-1} \arrow[r,"B"] \arrow[d,"\overline{M}"'] & H_n \arrow[d,"M"] \\
    \R^{n-1} \arrow[r,"B"]                            & H_n
  \end{tikzcd}
  %\tag{a}
  \end{equation}

  % (b) A intertwines M and \overline{M}
  \begin{equation}\label{diag:A}
  \begin{tikzcd}
    H_n \arrow[d,"\overline{M}"']  & H_n \arrow[d,"M"'] \arrow[l,"A"'] \\
    \R^{n-1}  & \R^{n-1} \arrow[l,"A"']
  \end{tikzcd}
  %\tag{b}
  \end{equation}

  % (c) Alternative step used in the stacked argument
  \begin{equation}\label{diag:commute}
  \begin{tikzcd}
    \R^{n-1} \arrow[r,"B"] \arrow[d,"\overline{M}"'] & H_n \arrow[d,"M"] \\
    \R^{n-1}                          & H_n \arrow[l,"A"'] 
  \end{tikzcd}
  %\tag{c}
  \end{equation}
\end{minipage}\hfill
\begin{minipage}[t]{0.44\linewidth}
  \vspace{0pt}
  % (d) The stacked composition used for the t-step relation
  \begin{equation}\label{diag:stack}
  \begin{tikzcd}
    \R^{n-1} \arrow[r,"B"] \arrow[d,"\overline{M}"'] & H_n \arrow[d,"M"] \\
    \R^{n-1} \arrow[r,"B"] \arrow[d,"\overline{M}"'] & H_n \arrow[d,"M"] \\
    \vdots     \arrow[d,"\overline{M}"']             & \vdots \arrow[d,"M"] \\
    \R^{n-1} \arrow[r,"B"] \arrow[d,"\overline{M}"'] & H_n \arrow[d,"M"] \\
    \R^{n-1}                        & H_n \arrow[l,"A"']   
  \end{tikzcd}
  %\tag{d}
  \end{equation}
\end{minipage}
\end{subequations}

We prove each part below using the commuting diagrams above:
\begin{enumerate}
\item For any $\bm v \in \R^{n-1}, B\bm v \in H_{n} \implies MB\bm v\in H_n$. But $BA$ acts as identity on $H_n$ whence $B\overline M \bm v= BAMB\bm v = BA(MB\bm v) = MB\bm v$. Since this is true for all vectors in the domain of $B\overline M$ (and $MB$), these two operators must be equal. This proves that the diagram in (\ref{diag:B}) commutes.
%\begin{equation}\label{diag:AB}\begin{tikzcd}
%	{\R^{n-1}} & {H_{n}} \\
%	{\R^{n-1}} & {H_{n}}
%	\arrow["B", from=1-1, to=1-2]
%	\arrow["{\overline M}"', from=1-1, to=2-1]
%	\arrow["M", from=1-2, to=2-2]
%	\arrow["B", from=2-1, to=2-2]
%\end{tikzcd}\end{equation}
\item Say $\bm z\in H_n$. Then $BA\bm z = \bm z$ because $\left.(BA)\right|_{H_n} = I|_{H_n}$. So $\overline MA\bm z = AMBA\bm z = AM\bm z$. This proves that the diagram in (\ref{diag:A}) commutes.
%\begin{equation}\begin{tikzcd}
%	{\R^{n-1}} & {H_{n}} \\
%	{\R^{n-1}} & {H_{n}}
%	\arrow["A"', from=1-2, to=1-1]
%	\arrow["{\overline M}"', from=1-1, to=2-1]
%	\arrow["M", from=1-2, to=2-2]
%	\arrow["A"', from=2-2, to=2-1]
%\end{tikzcd}\end{equation}
\item Using $B \overline M  =  MB$ from property 1, multiply both sides by $\overline M$ to obtain $B \overline M^2 = M B \overline M = M M B = M^2 B$. Repeating this,  it follows that $B\overline M^t = M^t B $. Then, as $AB = I_{n-1}$ hence $AM^tB = \overline M^t$. 
%\begin{equation}\label{diag:commute}\begin{tikzcd}
%	{\R^{n-1}} & {H_{n}} \\
%	{\R^{n-1}} & {H_{n}}
%	\arrow["B", from=1-1, to=1-2]
%	\arrow["{\overline M}"', from=1-1, to=2-1]
%	\arrow["M", from=1-2, to=2-2]
%	\arrow["A", from=2-2, to=2-1]
%\end{tikzcd}\end{equation}
%\[\begin{tikzcd}
%	{\R^{n-1}} & {H_{n}} \\
%	{\R^{n-1}} & {H_{n}} \\
%	\vdots & \vdots \\
%	{\R^{n-1}} & {H_{n}} \\
%	{\R^{n-1}} & {H_{n}}
%	\arrow["B", from=1-1, to=1-2]
%	\arrow["{\overline M}"', from=1-1, to=2-1]
%	\arrow["M", from=1-2, to=2-2]
%	\arrow["B", from=2-1, to=2-2]
%	\arrow["{\overline M}"', from=2-1, to=3-1]
%	\arrow["M", from=2-2, to=3-2]
%	\arrow["{\overline M}"', from=3-1, to=4-1]
%	\arrow["M", from=3-2, to=4-2]
%	\arrow["B", from=4-1, to=4-2]
%	\arrow["{\overline M}"', from=4-1, to=5-1]
%	\arrow["M", from=4-2, to=5-2]
%	\arrow["A", from=5-2, to=5-1]
%\end{tikzcd}\]
\end{enumerate}
Alternately, for the last property 3, here's a diagrammatic proof : take any $\bm y\in\R^{n-1}$ and note that by definition of $B$, we have $B \bm y \in H_{n}$.  Obtain $\bm x = B \bm y + \bm \pi$ so that $\bm x\in \overline \Delta_{n-1}$ and $\bm x - \bm \pi \in H_{n}$. Then, $M \bm x = M B \bm y + \bm \pi$ since $M \bm \pi = \bm \pi$. Next, $A(M \bm x - \bm \pi) = AMB \bm y = \overline M \bm y$.
These equations can be represented by the commutative diagram in (\ref{diag:commute}).
By stacking $t-1$ copies of (\ref{diag:B}) and one copy of (\ref{diag:commute}), as shown in (\ref{diag:stack}), we obtain the desired property 3.
\informsendofproof \end{proof}

%$MBA \bm z = M \bm z$, that is, by mutliplying by $A$ on both sides $\overline M A \bm z=A M \bm z$.

%\begin{proof}[Proof Sketch] We show a sketch for 1, 2 above.
%Note that $BA=\begin{bmatrix}
%I_{n-1} & \bm 0\\
%-\bm1^\top  & 0
%\end{bmatrix}$. Since $M$ is column stochastic (column sums to 1) we show that $BAM = M - \begin{bmatrix}
%    \bm 0^{n-1 \times n}\\
%    \bm 1^\top
%\end{bmatrix}$. But each column of $B$ is orthogonal to $\bm1$ so $B \overline M = BAMB= MB -\bm 0.$ 
%Similarly,
%$MBA = M - M_{\cdot n} \bm  1^\top$ where $M_{\cdot n}$ %is the last column of $M$. 
%Then, 
%$AMBA = AM - A M_{\cdot n} \bm  1^\top$. Consider any $\bm z \in H_{n-1}$. Then, $AMBA \bm z = AM \bm z - A \ M_{\cdot n}  \bm  1^\top \bm z$. Since $\bm z \in H_{n-1}$, then $\bm  1^\top \bm z = 0$. Hence, $AMBA \bm z = \overline{M} A \bm z = AM \bm z$ for any $\bm z \in H_{n}$.
%\end{proof}
In particular, the following corollary follows from the second property in the theorem above.
\begin{corollary} \label{cor:structure}
    Let $\bm v_i \sett A(\bm x_i - \bm \pi)$ for the sequence of states $\bm x_0, \bm x_1, \ldots$ generated by $M$. Then, $\bm v _1 = \overline M \bm v_0, \bm v_2 =  \overline M \bm v_1 \ldots$ with the sequence converging to $\bm 0$. We can recover $\bm x_i$ from the system $\overline M$ using $\bm x_i = B \bm v_i + \bm \pi$
\end{corollary}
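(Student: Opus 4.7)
The plan is to verify the recursion, the convergence, and the recovery formula in that order, relying mainly on Theorem~\ref{thm:structure}.

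First, I would observe that for each $i$, the vector $\pmb x_i - \pmb \pi$ lies in $H_n$, since $\pmb 1^\top \pmb x_i = \pmb 1^\top \pmb \pi = 1$. This lets me invoke part (2) of Theorem~\ref{thm:structure} with $\pmb z = \pmb x_i - \pmb \pi$ to compute
\begin{equation*}
\overline M \pmb v_i \;=\; \overline M A(\pmb x_i - \pmb \pi) \;=\; A M(\pmb x_i - \pmb \pi) \;=\; A(\pmb x_{i+1} - \pmb \pi) \;=\; \pmb v_{i+1},
\end{equation*}
using $M\pmb \pi = \pmb \pi$. Iterating from $\pmb v_0$ gives the claimed sequence.

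Next, for the convergence I would simply note that since $M$ defines an aperiodic irreducible Markov chain, $\pmb x_i \to \pmb \pi$ in $\|\cdot\|_1$, so by continuity of the linear map $A$, we have $\pmb v_i = A(\pmb x_i - \pmb \pi) \to A\pmb 0 = \pmb 0$.

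The slightly less immediate step, and the one I expect to require the most care, is the recovery formula; it amounts to showing that $BA$ restricts to the identity on $H_n$. I would prove this by direct computation from the explicit entries of $A$ and $B$: for $\pmb z \in H_n$ one has $A\pmb z = (z_1,\, z_1+z_2,\, \ldots,\, z_1+\cdots+z_{n-1})^\top$, and applying the telescoping structure of $B$ (whose $k$-th row for $2\le k\le n-1$ is $\pmb e_k^\top - \pmb e_{k-1}^\top$, with first row $\pmb e_1^\top$ and last row $-\pmb e_{n-1}^\top$) yields the vector $(z_1,\, z_2,\, \ldots,\, z_{n-1},\, -\sum_{j=1}^{n-1}z_j)^\top$. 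The constraint $\pmb 1^\top \pmb z = 0$ identifies the last entry with $z_n$, so $BA\pmb z = \pmb z$. Applying this with $\pmb z = \pmb x_i - \pmb \pi \in H_n$ gives $B \pmb v_i = \pmb x_i - \pmb \pi$, i.e., $\pmb x_i = B\pmb v_i + \pmb \pi$, completing the proof.
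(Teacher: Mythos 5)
Your proposal is correct and follows essentially the same route as the paper: the recursion comes from part (2) of Theorem~\ref{thm:structure} applied to $\pmb z = \pmb x_i - \pmb \pi \in H_n$, and the recovery formula from the fact that $BA$ acts as the identity on $H_n$. The only cosmetic differences are that you verify $BA\pmb z = \pmb z$ entrywise via the telescoping rows of $B$ rather than by writing out the matrix $BA$ explicitly, and you spell out the convergence $\pmb v_i \to \pmb 0$ (via continuity of $A$ and $\pmb x_i \to \pmb \pi$), which the paper leaves implicit.
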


This result above implies that we can obtain all the states in the dynamics of $M$ starting from $\bm x_0$ by running the system $\overline M$ starting from $A(\bm x_0 - \bm \pi)$. A result similar to the above corollary but in the opposite direction (obtain states for $\overline M$ by running $M$) can be obtained by leveraging the first property in Theorem~\ref{thm:structure}.
Finally, using the third property in Theorem~\ref{thm:structure}, we prove the following.

\begin{proposition} \label{prop:GAS}
$\overline M$ is a GAS system on $\R^{n-1}$ with fixed point $\bm 0$, hence all eigenvalues of $\overline M$ have magnitude $<1$, i.e., $\rho(\overline M) < 1$.
\end{proposition}

% Let $B = \begin{bmatrix}
% 1 & 0 & \cdots & 0\\
% \vdots&\ddots&\ddots&\vdots\\
% 1 & \cdots &1&0 \\
% 1 &\cdots &\cdots &1
% \end{bmatrix}\in \R^{n\times n}$ and $A=\begin{bmatrix}
% \bm e_1-\bm e_2& \bm e_2-\bm e_3 &\cdots&\bm e_{n-2}-\bm e_{n} & \bm e_{n}
% \end{bmatrix}\in \R^{n\times n}$. Note that Consider the dynamics on $\R^{n-1}$ given by $\bm x \mapsto S(M (S^{-1}\bm x + \bm \pi)-\bm \pi)$. Note that $SMS^{-1}$ has the same

%\subsection{Markov Chains Transformation} \label{sec:transforms}

%As we assumed distinct eigenvalues, we can write the matrix $M = \sum_{i = 1}^{n} a_i \bm v_i \bm v_i^T$ for eigenvalues $a_1, \ldots, a_n$ and eigenvectors $\bm v_1, \ldots, \bm v_n$. In particular, since $M$ is a column stochastic matrix one eigenvalue is $1$, thus, let  $a_1 = 1$.

\section{Model and Problem Statement}\label{sec:statement}
Given a DTLDS with dynamics $\bm x_{t+1} = M \bm x_t$ starting at a fixed known $\bm x_0$, we define a cost incurred when the system stops at a state $\bm x$.
The cost $g(\bm x) = \langle \bm c, \bm x \rangle
$ is defined as a linear function in $\bm x$ given by constant vector $\bm c$. This is a natural cost for Markov chains, where $\bm x$ is a probability distribution over underlying states and expected cost is $\bm x$-weighted average of the cost of each state. Following our analysis in the previous section, we can WLOG restrict our attention to GAS DTLDS only. This claim is supported by Corollary~\ref{cor:structure} whence for any Markov chain $M$, the cost $\langle \bm c, \bm x \rangle = \langle B^T \bm c, \bm v \rangle + \langle \bm c, \bm \pi\rangle$ where $\bm v$ is a state of the corresponding GAS $\overline M$ as previously defined; cost is again linear in $\bm v$. Thus, WLOG we let $M$ be a GAS DTLDS and continue with using the notation $\bm x$ for states.

The uncertainty in when the system stops is described by a probability distribution $\bm p = \{p_t\}_{t \in \mathcal{T}}$ over a set of consecutive times steps $\mathcal{T}$ ($|\mathcal{T}|$ can be infinity); then, the overall expected reward is $\sum_{t \in \mathcal{T}} p_t \langle \bm c, \bm x_t \rangle $, where $\bm x_t = M^t \bm x_0$. Based on our motivation, we do not know the true $\bm p$ but observe finitely many samples from the distribution. Given the samples, we can form an nominal distribution $\hat{ \bm p} = \{\hat{p}_t \}_{t \in \mathcal{T}}$, which could be the empirical distribution or any distribution estimated from the samples. Then, we consider an ambiguity set of distributions 
%$$\mathcal{P} = \{ \bm q \st W_1(\hat{ \bm p}, \bm q) \leq \xi, \bm q \text{ is a prob. distribution over $\mathcal{T}$} \} \; ,$$
\begin{align}\cP= \set{ \bm q \in \Delta(\cT) \st W_1(\hat{ \bm p}, \bm q) \leq \xi } \; \label{eq:ambiguity},\end{align}
where $W_1$ is a Wasserstein-1 distance, which is a popular choice for ambiguity set in distributional robustness.
In order to be robust, we consider the worst case cost over this ambiguity set, specified as 
the Distributional Robust Cost Estimation (\ref{eq:dre}) problem below. This takes a robust estimation form when $\xi$ is large enough {\color{black}and $\cP$ is the entire probability simplex}. In such a case, all dirac-delta distributions over $\mathcal{T}$ are allowed and then due to the linear nature of the objective, the problem becomes Robust Cost Estimation~\eqref{eq:re} below.
We aim to solve the problems~\ref{eq:dre} and~\ref{eq:re}.
%\begin{tabular}{c | c}
%\fbox{
%\begin{minipage}[t]{0.48\textwidth}
%\parbox{6cm}{
\begin{align}
\max_{\bm q \in \mathcal{P}} \; \sum_{t \in \mathcal{T}}  q_t \inner { \bm c} {\bm x_t } \tag{\sf{DRCE}} \label{eq:dre} \\
\max_{t \in \mathcal{T}} \; \langle  \bm c , \bm x_t \rangle \tag{\sf{RCE}}  \label{eq:re}
\end{align}%}
%\end{minipage}
%\hfill${\Bigg\vert}$
%&
%\begin{minipage}[t]{0.47\textwidth}
%\parbox{5cm}{
%\end{minipage}
%}
%\end{tabular}

\emph{Remark}: While our focus here is on temporal robustness, we can also handle additional uncertainty in the initial state $\bm x_0$. More formally, let this uncertainty be specified by a set $\ds\cU = \set{\bm x \st \bm x = \hat{\bm x}_0 + \sum_{i=1}^U \alpha_i \bm u_i, \sum_{i=1}^U \alpha_i = 1, \alpha_i \geq 0 ~ \forall i}$ for given vertices $\lp\bm u_i\rp_{i \in [U]}$  and a nominal $\hat{\bm x}_0$; such vertices can arise in distributional ambiguity set, as we show in the sequel for Wasserstein polytope. The result below reduces the overall problem $\ds\max_{\bm x_0 \in \mathcal{U}, \bm q \in \mathcal{P}} \;  \sum_{t \in \mathcal{T}}  q_t \inner { \bm c} {\bm x_t }$ to solving~\ref{eq:dre} $U$ times. A similar result with~\ref{eq:re} is a simple corollary of this result.

\begin{proposition} \label{prop:initial}
$\ds\max_{\bm x_0 \in \mathcal{U}, \bm q \in \mathcal{P}} \;  \sum_{t \in \mathcal{T}}  q_t \inner { \bm c} {\bm x_t } = \max_{i \in [U]} \max_{\bm q \in \mathcal{P}}  \sum_{t \in \mathcal{T}} q_t \langle \bm c, M^t (\hat{\bm x}_0 + \bm u_i )\rangle $.
\end{proposition}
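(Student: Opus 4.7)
The plan is to reduce the joint maximization over $\pmb x_0 \in \cU$ and $\pmb q \in \cP$ to a finite enumeration over the $U$ vertices of $\cU$, by exploiting the fact that the objective is linear in $\pmb x_0$ (and the set $\cU$ is a polytope whose vertices are exactly $\hat{\pmb x}_0 + \pmb u_i$).

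First, I would rewrite any $\pmb x_0 \in \cU$ as $\pmb x_0 = \sum_{i=1}^U \alpha_i (\hat{\pmb x}_0 + \pmb u_i)$, which is valid because the coefficients $\alpha_i$ are nonnegative and $\sum_i \alpha_i = 1$, so the extra copies of $\hat{\pmb x}_0$ collapse to a single $\hat{\pmb x}_0$ as required. By linearity of $M^t$ and of the inner product with $\pmb c$, the inner objective becomes
\begin{align*}
\sum_{t \in \cT} q_t \inner{\pmb c}{M^t \pmb x_0} \;=\; \sum_{i=1}^U \alpha_i \,\Phi_i(\pmb q), \qquad \text{where } \Phi_i(\pmb q) \sett \sum_{t \in \cT} q_t \inner{\pmb c}{M^t (\hat{\pmb x}_0 + \pmb u_i)}.
\end{align*}

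Next, for any fixed $\pmb q \in \cP$, the map $\pmb x_0 \mapsto \sum_t q_t \inner{\pmb c}{M^t \pmb x_0}$ is linear and $\cU$ is a polytope with vertices among $\{\hat{\pmb x}_0 + \pmb u_i\}_{i \in [U]}$, so the maximum over $\cU$ is attained at one of these vertices; equivalently, $\max_{\boldsymbol\alpha \in \Delta_U} \sum_i \alpha_i \Phi_i(\pmb q) = \max_{i \in [U]} \Phi_i(\pmb q)$. Hence
\begin{align*}
\max_{\pmb x_0 \in \cU} \sum_{t \in \cT} q_t \inner{\pmb c}{M^t \pmb x_0} \;=\; \max_{i \in [U]} \Phi_i(\pmb q).
\end{align*}

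Finally, since the two maximizations are over independent variables (the choice of $i$ does not restrict $\pmb q$ and vice versa), I would swap them:
\begin{align*}
\max_{\pmb x_0 \in \cU,\,\pmb q \in \cP} \sum_{t \in \cT} q_t \inner{\pmb c}{M^t \pmb x_0} \;=\; \max_{\pmb q \in \cP} \max_{i \in [U]} \Phi_i(\pmb q) \;=\; \max_{i \in [U]} \max_{\pmb q \in \cP} \Phi_i(\pmb q),
\end{align*}
which is exactly the right-hand side of the claim. There is no real obstacle here; the only subtlety worth double-checking is the rewriting step, ensuring that the constraint $\sum_i \alpha_i = 1$ is what makes $\hat{\pmb x}_0 + \sum_i \alpha_i \pmb u_i$ equal $\sum_i \alpha_i (\hat{\pmb x}_0 + \pmb u_i)$, so that the cost decomposes as a genuine convex combination of the vertex-evaluated objectives.
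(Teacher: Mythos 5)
Your proposal is correct and uses essentially the same argument as the paper: both exploit linearity of the objective in the mixing weights $\alpha_i$ so that the maximum over the simplex is attained at a single vertex, and then reduce to enumerating $i \in [U]$. The only cosmetic difference is that you fold $\hat{\pmb x}_0$ into each vertex term via $\sum_i \alpha_i = 1$ and swap the two maximizations explicitly, whereas the paper keeps the $\hat{\pmb x}_0$ term separate and argues directly about an optimal $(\pmb q^*, \alpha^*)$; the conclusion and the key step are the same.
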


%\section{Approach}

\section{Approach}\label{sec:approach}
We describe our approach for finite and infinite support of $\hat{\bm p}$ in the two subsections next. Finite support could arise when $\hat{\bm p}$ is taken as the empirical distribution and infinite support when $\hat{\bm p}$ is an estimated parametric distribution, given data samples. WLOG and to avoid cumbersome notation, we assume that $\mathcal{T} = \set{1,2,\cdots,T} = [T]$, where $T$ could be infinite. 

\subsection{Finite Support: Robust Cost Estimation} \label{sec:finitesupportrce}
\textit{Naive approach}: The naive way of solving the \ref{eq:re} problem is to evaluate $\inner{\bm c}{M^t \bm x_0}$ for each $t \in \mathcal{T}$. It is known that the dot product of two vectors (of size $\cO(n)$) has $\cO(n)$ time complexity, and the standard approach of $n$ dot products to compute a matrix-vector product has $\cO(n^2)$ time complexity. Using the fact that $M^{t} \bm x_0 = M \times M^{t-1} \bm x_0$, a naive approach would require $T$ matrix-vector and $T$ dot products, yielding a time complexity of $\cO(n^2T)$.

\textit{Prior results based approach}: A slightly better approach can be obtained by observing that $M^t \bm x_0 = M \times M^{t-1} \bm x_0$ involves multiplying the same matrix $M$ with different vectors. A prior result~\cite{williams2007matrix} has shown that by preprocessing the matrix $M$, the time complexity of $T$ matrix-vector products is $\cO(n^{2 + \epsilon} + T n^2/(\epsilon \log n)^2)$ for any small positive constant $\epsilon \in \lp0,\frac12\rp$.  Thus, if $T$ is larger than $n$, then the preprocessing step of $\cO(n^{2 + \epsilon})$ can be beneficial.

\textit{Our approach}: We take this approach further in an algorithm that we call Small and Big Strides (SaBS) shown in Algorithm~\ref{alg:robustfinite}. We start with the observation that $\langle c, M^t \bm x_0 \rangle = \langle M^\top c, M^{t-1} \bm x_0 \rangle$, and applying this recursively $\langle c, M^t \bm x_0 \rangle = \langle (M^\top)^j c, M^{t-j} \bm x_0 \rangle$. Let $B = \lfloor\sqrt{T}\rfloor$. First, we compute $M^B$ (line 2) by successive squaring $M^1, M^2, M^4$, a popular approach in number theory~\cite{karp1984exponential}. Then, preprocess $M^B, M^\top$ and $M$ for later use in multiple matrix-vector products. Then, precompute $M^{B } \bm x_0$, $M^{2 B} \bm x_0$, $ \ldots, M^{B^2} \bm x_0$ and $(M^\top)^1 \bm c, \ldots, (M^\top)^{B-1} \bm c$. Then, using the fact that ${\color{black}\langle c, M^t \bm x_0 \rangle = \langle (M^\top)^{t \mod B} c, M^{\lfloor t/B \rfloor B} \bm x_0 \rangle}$, we can compute all $\langle c, M^t \bm x_0 \rangle$ for each $t \in \{1, \ldots , B^2\}$ using the precomputed values in the loop at line 6. The remaining powers from $B^2+1$ to $T$ is computed in loop starting at line 7. 
%A visual illustration is provided in Figure~\ref{fig:algexplain}. 
We achieve a better runtime dependence on $T$:
\begin{lemma} \label{lem:robustruntime}
$\fa\epsilon\in\lp0,\frac12\rp$, Algorithm \ref{alg:robustfinite} can be performed in $\cO\left(n^\omega \log T + n^{2 + \epsilon} + \sqrt{T} n^2/(\epsilon \log n)^2 + nT \right)$ time, where $n^\omega$ is the time complexity of matrix multiplication. (best known $\omega$ currently is $\approx 2.3715$~\citep{williams2024new}). 
\end{lemma}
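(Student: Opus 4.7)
The plan is to walk through the algorithm line-by-line, charge a cost to each block, and sum up, identifying where each of the four terms in the stated bound comes from. Let $B = \lfloor \sqrt{T} \rfloor$, so $B^2 \le T$ and $T - B^2 = O(\sqrt{T})$, which is the key arithmetic fact driving the amortization.

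First I would analyze the preprocessing phase. Computing $M^{B}$ by successive squaring uses $O(\log B) = O(\log T)$ matrix–matrix multiplications, each at cost $O(n^\omega)$, contributing the $O(n^\omega \log T)$ term. The Williams preprocessing of the three matrices $M^B, M^\top, M$ is three invocations of the scheme from~\cite{williams2007matrix}, each costing $O(n^{2+\epsilon})$, contributing the $O(n^{2+\epsilon})$ term. Next, line~3 performs $B$ matrix–vector products with the preprocessed $M^B$ (computing $\pmb v^{iB}$ incrementally as $M^B \pmb v^{(i-1)B}$), and line~4 performs $B-1$ matrix–vector products with the preprocessed $M^\top$. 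By the Williams bound, each preprocessed matrix–vector product costs $O(n^2/(\epsilon \log n)^2)$, so these two steps together contribute $O(\sqrt{T}\, n^2/(\epsilon \log n)^2)$.

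Then I would handle the two loops. In the first loop, for each $t \in \{1,\ldots,B^2\}$, the vectors $(M^\top)^{t \bmod B}\pmb c$ and $M^{\lfloor t/B\rfloor B}\pmb x_0$ are already stored (with the edge cases $t \bmod B = 0$ or $\lfloor t/B\rfloor = 0$ handled by using $\pmb c$ or $\pmb x_0$ directly), so each iteration is an $O(1)$ lookup plus an $O(n)$ dot product. With $B^2 \le T$ iterations this contributes $O(nT)$, which is the last term. In the second loop, the $O(\sqrt{T})$ remaining indices each require one preprocessed matrix–vector product $M \pmb v^{t-1}$ at $O(n^2/(\epsilon \log n)^2)$ and one dot product at $O(n)$, so the loop is absorbed into the $O(\sqrt{T}\, n^2/(\epsilon \log n)^2)$ term already present. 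Summing the four contributions gives exactly the bound in the statement.

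The only mild subtlety is verifying correctness of the identity exploited in line~6, namely $\langle \pmb c, M^t \pmb x_0\rangle = \langle (M^\top)^{t \bmod B}\pmb c, M^{\lfloor t/B\rfloor B}\pmb x_0\rangle$, which follows from $t = \lfloor t/B\rfloor B + (t \bmod B)$ together with the adjoint identity $\langle \pmb c, M^k \pmb y\rangle = \langle (M^\top)^k \pmb c, \pmb y\rangle$; once this is in place, the runtime analysis is just bookkeeping. I do not expect a genuine obstacle — the choice $B = \lfloor\sqrt{T}\rfloor$ is precisely what balances the number of stored "big stride" vectors $M^{iB}\pmb x_0$ against the number of "small stride" vectors $(M^\top)^j\pmb c$, so both preprocessing phases cost $O(\sqrt{T}\, n^2/(\epsilon\log n)^2)$ and the residual work past $B^2$ is $O(\sqrt{T})$ iterations, matching the claimed complexity.
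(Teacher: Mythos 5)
Your proposal is correct and follows essentially the same accounting as the paper's proof: $\cO(n^\omega \log T)$ for the repeated squaring, $\cO(n^{2+\epsilon})$ for the Williams preprocessing, $\cO(\sqrt{T}\, n^2/(\epsilon \log n)^2)$ for the $\cO(\sqrt{T})$ preprocessed matrix--vector products (strides plus the tail loop past $B^2$), and $\cO(nT)$ for the dot products in the first loop. Your extra care with the correctness identity and the $t \bmod B = 0$ edge cases is a nice touch but does not change the argument.
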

\begin{proof}{\informsproof}
The complexity of finding $M^{B}$ (line 1) is $\cO(n^\omega \log B )$ as there are about $\log B$ squares and multiplications in the successive squaring (matrix products). Precomputing $B$ terms in line 3 takes  $\cO(n^{2 + \epsilon} + B n^2/(\epsilon \log n)^2)$ time. Similarly, precomputing the matrix transpose powers in line 4 takes $\cO(n^{2 + \epsilon} + B n^2/(\epsilon \log n)^2)$ time. The first for loop (line 5) has time complexity of $\cO(n T)$ because of one dot product in the loop that runs $B^2$ times. The second loop (line 7) does a matrix-vector product with the same matrix $M$ a max of $\sqrt{T}$ times, which has time complexity $\cO \left(n^{2 + \epsilon} + \sqrt{T} n^2/(\epsilon \log n)^2\right)$.
\informsendofproof \end{proof}

%\IncMargin{0.4em}
\begin{algorithm}[t]
\caption{Small and Big Strides (SaBS)} \label{alg:robustfinite}  
\DontPrintSemicolon
\SetAlgoLined
%\begin{algorithmic}[1]
%\State \textbf{Input:} $M$, $c$, $\bm x_0$, $\mathcal{T}$, and $T$.
Let $B = \lfloor \sqrt{T} \rfloor$. Compute $M^{B}$ by successive squaring (see text).\; Preprocess $M^{B}, M^\top, M$ as in~\cite{williams2007matrix}.\;
 Let $\bm v^j = M^j \bm x_0$. Compute $M^{B} \bm x_0, M^{2B} \bm x_0$, $\ldots$, $M^{B^2} \bm x_0$ using $\bm v^{(i+1)B} = M^{B} \bm v^{iB} $.\;
 \tcc{Product of $M^B$ with many vectors as in~\cite{williams2007matrix} }
%\State $M^T \bm x_0$.
Compute $(M^\top)^1 \bm c$, $(M^\top)^2 \bm c$, $\ldots$, $(M^\top)^{B-1} \bm c$ using $(M^\top)^{i+1} \bm c = M^\top (M^\top)^{i} \bm c$.\;
 \tcc{Product of $M^\top$ with many vectors as in~\cite{williams2007matrix} }
%\STATE \textbf{Compute Results:}
\For{each $t \in \{1, \ldots, B^2\}$}{
    Obtain $\langle \bm c, M^t \bm x_0 \rangle =\langle (M^\top)^{ t\mod B} \bm c, M^{ \lfloor t / B\rfloor B} \bm x_0 \rangle$ using precomputed values above.\;
}
\For{each $t \in \{B^2+1, \ldots, T\}$}{ 
    Compute $M^{t} \bm x_0  = M \bm v^{t-1}$, and then $\langle \bm c, M^t \bm x_0 \rangle $. Let $\bm v^{t} = M^{t} \bm x_0$.  \;
     \tcc{Product of $M$ with many vectors~\citep{williams2007matrix} }
}
\textbf{Output:} All $\langle \bm c, M^t \bm x_0 \rangle$ for $t \in \mathcal{T}$.
%\end{algorithmic}
\end{algorithm}
%\DecMargin{0.4em}

%\begin{figure}[t!]
%    \centering
%    \includegraphics[width=0.95\linewidth]{Alg.png}
%    \caption{A visual illustration of the SaBS algorithm. (\textbf{A}) is line 4 in SaBS, which is used to compute the matrix transpose time $c$. (\textbf{B}) is line 3 in SaBS, which is used to compute $M^t x_0$ at time $t$ in $B, 2B, \ldots, B^2$. Then, the results of (\textbf{A}) and (\textbf{B}) are combined by inner product  $\langle \cdot, \cdot \rangle$ (line 6) to yield the objective values from $1$ to $B^2$. Finally, (\textbf{C}) are lines 7-8 in SaBS, which compute the objective from  $B^2 = \lfloor \sqrt{T} \rfloor ^2$ to $T$.}
%    \label{fig:algexplain}
%\end{figure}

The result above improves the dominating term $T n^2/(\epsilon \log n)^2$ from the previous approach to $\sqrt{T} n^2/(\epsilon \log n)^2$ and introduces the terms $nT$ that is better than $T n^2/(\epsilon \log n)^2$. Also, for $T$ larger than $n$, the term $n^{\omega}\log T$ is better than $T n^2/(\epsilon \log n)^2$. 

As a final remark, another plausible approach is to find the real Jordan form of $M=PJP^{-1}$ and use the fact that $M^k = PJ^k P^{-1}$. Given $J$, $J^k$ has a closed form formula in terms of entries of $J$.
However, computing the Jordan form itself takes $\cO(n^3)$ time, and then after this one would still need to compute $M^t \bm x_0$ for each $t$ which is $\cO(n^2 T)$, which provides an overall worse time complexity than our result. Note that we cannot employ the prior result~\cite{williams2007matrix} for this plausible approach since in this approach, we do not use the same matrix in multiple matrix-vector products.

%Note that $\langle c, M^t \bm x_0 \rangle = \langle M^\top c, M^{t-1} \bm x_0 \rangle$, and applying this recursively $\langle c, M^t \bm x_0 \rangle = \langle (M^\top)^j c, M^{t-j} \bm x_0 \rangle$. Then, we precompute $M^{\sqrt{T}} \bm x_0$, $M^{2\sqrt{T}} \bm x_0$, $ \ldots, M^T \bm x_0$ and $(M^\top)^1 c, \ldots, (M^\top)^{\sqrt{T}} c$. Then, using the fact that ${\color{blue}\langle c, M^t \bm x_0 \rangle = \langle (M^\top)^{t \mod \sqrt{T}} c, M^{\lfloor t/\sqrt{T} \rfloor \sqrt T} \bm x_0 \rangle}$, we can compute all $\langle c, M^t \bm x_0 \rangle$ for each $t \in \mathcal{T}$ using the precomputed values.

%Is another split possible instead of square root of $T$, powers of 2?

%{\color{red}Nilava: Maybe find the real Jordan form of $M=XJX^{-1}$ and for each Jordan block $\tilde J$ of size $2$, $\tilde J^k = \begin{bmatrix}\lambda^k&k\lambda^{k-1}\\&\lambda^k\end{bmatrix}$, }

\subsection{Finite Support: Distributional Robust Cost Estimation} Wasserstein ambiguity sets have been applied in optimization and machine learning~\citep{KuhnWasserstein, EsfahaniWasserstein}.
First, we present a useful and general geometric characterization of Wasserstein distance in finite dimensions that has appeared in recent literature~\citep{indep2,indep,wass} without a detailed proof, thus, we provide a detailed and intuitive proof (Theorem~\ref{thm:polytope}) of the characterization that relies on a separating hyperplane argument. Fix a distance $d:\cT\times \cT\to\R_{\ge 0}$. This is essentially given by a symmetric matrix with entries $d_{ij}$ such that $d_{ii}=0\fa i\in\cT, d_{ij}>0$ whenever $i\ne j\in\cT$, and $d_{ij}+d_{jk} \ge d_{ik}\fa i,j,k\in\cT$. The Wasserstein distance $W^d(\bm \rho,\bm\nu)=W_1^d(\bm \rho,\bm\nu)$ based on the metric $(d_{ij})$ between two probability distributions $\bm \rho,\bm \nu\in \Delta(\cT)$ is defined for finite $\cT$ in a dual form as 
$$\max \Big\{\sum_{i=1}^T(\rho_i-\nu_i)x_i\st \abs{x_i-x_j}\le d_{ij}\fa i,j\in\cT \Big\}.$$ 
The above definition is a direct result of Kantorovich dual formulation of Wasserstein distance~\citep{villani2008optimal}.
Recall that any norm $\norm{\cdot}{}$ in a vector space $X$ is a function that satisfies certain properties for any $x, y \in X$: (1) \emph{triangle inequality}: $\norm{x+y}{} \leq \norm{x}{} + \norm{y}{}$,
(2) \emph{absolute homogeneity}: $\norm{ax}{} = |a|\norm{x}{}$, and
(3) \emph{positivity}: $\norm{x}{} = 0$ iff $x = 0$.
Recall $H_T$, which was defined as $\{\bm x\in \R^{T} \st \bm 1^\top \bm x=0 \}$, is a subspace of $\R^T$. Then, the Wasserstein distance $W^d_1$ induces a norm on $H_T$.
\begin{proposition}\label{prop:wasserstein-norm} $\norm{\cdot }{W,d}$ as specified below is a norm on $H_T$; for any $\bm \mu \in H_T$
$$\norm{\bm \mu}{W,d} \sett \max\big\{\bm \mu^\top\bm x\st \bm x\in\R^T, \abs{x_i-x_j}\le d_{ij}\fa i,j \big\}.$$ 
\end{proposition}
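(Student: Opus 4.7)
The plan is to verify the three defining axioms of a norm on $H_T$, after first establishing that the maximum is finite (hence attained) for every $\pmb\mu \in H_T$. For well-definedness, I would note that the feasible set $F \sett \{\pmb x \in \R^T : |x_i - x_j| \le d_{ij} \;\forall i,j\in\cT\}$ is invariant under translation by multiples of $\pmb 1$ (if $\pmb x \in F$, then $\pmb x + c\pmb 1 \in F$), hence unbounded. However, since $\pmb\mu \in H_T$ means $\pmb\mu^\top \pmb 1 = 0$, the linear objective $\pmb\mu^\top \pmb x$ is also translation-invariant along $\pmb 1$, so I can restrict without loss of generality to the compact slice $F \cap \{x_1 = 0\}$, where the constraint $|x_j| = |x_j - x_1| \le d_{1j}$ makes the feasible region bounded; the linear objective then attains its maximum.

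The triangle inequality follows from subadditivity of the maximum applied over the common feasible set $F$. For absolute homogeneity I would use the central symmetry of $F$ (since the constraints involve $|x_i - x_j|$, we have $\pmb x \in F \iff -\pmb x \in F$) to reduce the case $a < 0$ to $a \ge 0$ via the substitution $\pmb x \mapsto -\pmb x$ in the maximum, which converts $a\pmb\mu^\top \pmb x$ into $|a|\pmb\mu^\top \pmb x$ over the same feasible set.

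The main obstacle is the positivity axiom: showing $\norm{\pmb\mu}{W}^d > 0$ whenever $\pmb\mu \in H_T \setminus \{\pmb 0\}$. Here I would use a distance-to-a-set construction. Because $\pmb\mu^\top \pmb 1 = 0$ and $\pmb\mu \ne \pmb 0$, both $A \sett \{j \in \cT : \mu_j \le 0\}$ and its complement $A^c = \{j \in \cT : \mu_j > 0\}$ are nonempty. Defining $x_i \sett \min_{a \in A} d_{ia}$, the triangle inequality for $d$ yields $|x_i - x_j| \le d_{ij}$ (for any $a \in A$, $x_j \le d_{ja} \le d_{ji} + d_{ia}$, so minimizing over $a$ gives $x_j - x_i \le d_{ij}$, and the reverse inequality follows by symmetry), so $\pmb x \in F$. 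Since $d_{ij} > 0$ for $i \ne j$, we have $x_i = 0$ on $A$ and $x_i > 0$ on $A^c$, and therefore $\pmb\mu^\top \pmb x = \sum_{i \in A^c} \mu_i x_i > 0$, establishing the strict positivity of $\norm{\pmb\mu}{W}^d$.
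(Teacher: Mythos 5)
Your proof is correct, and its overall skeleton (verify homogeneity via the central symmetry $\pmb x\in F\iff-\pmb x\in F$, triangle inequality via subadditivity of the max over a common feasible set) matches the paper's argument; your explicit treatment of well-definedness by restricting to the compact slice $\{x_1=0\}$ is a point the paper only handles in the remark following the proposition, so including it is a mild bonus. Where you genuinely diverge is the positivity axiom. The paper argues by contradiction: it tests the objective against the $T$ feasible vectors $\pmb\omega_i=\alpha_i\pmb e_i$ with $\alpha_i=\min_{j\ne i}d_{ij}>0$, deduces $\mu_i\le 0$ for every $i$, and then uses $\pmb 1^\top\pmb\mu=0$ to force $\pmb\mu=\pmb 0$; this only needs $d_{ii}=0$ and $d_{ij}>0$, not the triangle inequality for $d$. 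You instead exhibit a single explicit witness: with $A=\{j:\mu_j\le 0\}$ and $x_i=\min_{a\in A}d_{ia}$, the triangle inequality for $d$ guarantees feasibility of $\pmb x$, and $\pmb\mu^\top\pmb x=\sum_{i\in A^c}\mu_i x_i>0$ gives strict positivity directly, with both $A$ and $A^c$ nonempty precisely because $\pmb\mu\in H_T\setminus\{\pmb 0\}$. Your construction is slightly less elementary in that it invokes the metric's triangle inequality, but it is constructive (it produces a feasible direction certifying $\norm{\pmb\mu}{W}^d>0$ rather than ruling out the degenerate case), and it would generalize to showing quantitative lower bounds on the norm; the paper's route is marginally more economical in hypotheses. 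Both are complete and correct.
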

Observe that the constraint set of the above linear program is unbounded, that is, if $\bm x$ satisfies the constraints, then so does $\bm x+\lambda\bm 1$ for any real number $\lambda$ and also, given $\bm\mu\in H_T$, $\bm \mu^\top(\bm x+\lambda\bm 1) = \bm \mu^\top\bm x$. This observation allows for an equivalent formulation further restricting $\bm x$ to a bounded set
$$\norm{\bm \mu}{W,d} = \max\big\{\bm \mu^\top\bm x\st \bm x\in H_T, \abs{x_i-x_j}\le d_{ij}\fa i,j \big\} .$$ 
%\begin{wrapfigure}{r}{0.41\textwidth}

\iftoggle{isOR}{}{\begin{figure}
\centering
\includegraphics[scale=0.15]{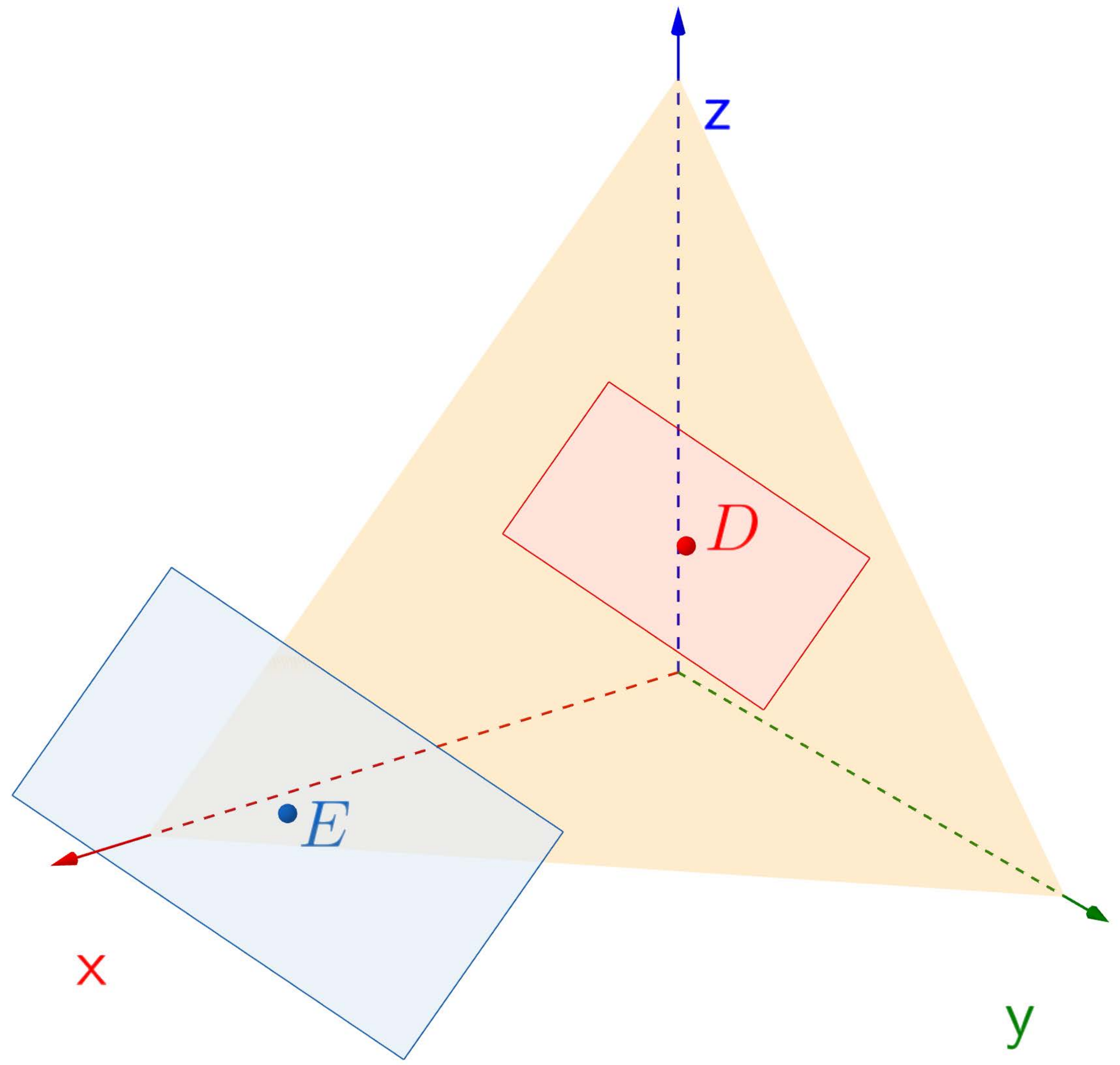}
\caption{$\norm{\cdot}{W}^d$ ($d_{ij}=\abs{i-j}$) balls on the probability simplex in 3d space with {\color{blue}$\xi=0.3$ centered at $E=(0.83,0.13,0.04)$} and {\color{red}$\xi=0.2$ centered at $D=(0.24,0.35,0.41)$}.} \label{fig:normball}
\end{figure}}

%\vspace{-0.5em}
%\vspace{-2.5em}
%\end{wrapfigure}
Next, recall that our ambiguity set $\cP$ in Equation~\ref{eq:ambiguity} contain $\bm q \in \Delta_T$ such that $W^d_1(\hat{\bm p}, \bm q) \leq \epsilon$. From the definition of $W^d_1$ above and the norm, we can see that this is the same as $\norm{\bm\mu}{W,d} \leq \xi$ for $\bm\mu = \bm q - \hat{\bm p}$. Further, since $q_i \geq 0$, this leads to following conclusion in a straightforward manner:
$$
\cP  = \{ \bm q \st \bm q = \hat{\bm p} + \bm\mu, \bm \mu \in H_T, \norm{\bm\mu}{W}^d \leq \xi, \bm q \geq 0\}
$$
Alternatively, the above can be thought of as the intersection of the probability simplex $\Delta_T$ and the norm ball $\norm{\bm\mu}{W}^d \leq \xi$ shifted by $\hat{\bm p}$. An illustration in $\R^3$ (i.e., $\abs T=3$) is shown in Figure~\ref{fig:normball}.
Thus, we investigate the structure of the norm $\norm{\cdot}{W,d}.$ We show that 
the unit ball of this norm is a $T-1$ dimensional polytope. 

\begin{theorem} \label{thm:polytope}
The Wasserstein unit ball in $H_T$ around the origin $\bm 0\in\R^T$ based on the distance $d:\cT^2\to\R_{\ge 0}$ is the convex hull of the points $\left\{\frac{\bm e_i-\bm e_j}{d_{ij}}\st i\ne j\right\}$.
\end{theorem}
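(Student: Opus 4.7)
The plan is to prove both inclusions between $B := \{\pmb\mu \in H_T : \|\pmb\mu\|_W^d \le 1\}$ and $K := \mathrm{conv}\{(\pmb e_i - \pmb e_j)/d_{ij} : i \ne j\}$.

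For the direction $K \subseteq B$, since $B$ is the unit ball of the norm $\|\cdot\|_W^d$ (established in Proposition~\ref{prop:wasserstein-norm}) and is therefore convex, it suffices to check that each generator has norm at most $1$. For a fixed pair $i \ne j$, I would evaluate $\|(\pmb e_i - \pmb e_j)/d_{ij}\|_W^d$ by choosing the explicit feasible vector $\pmb x$ with $x_a := d_{aj}$ for every $a \in \cT$; the triangle inequality for $d$ gives $|x_a - x_b| = |d_{aj} - d_{bj}| \le d_{ab}$, so $\pmb x$ is feasible in the LP defining the norm, and it achieves value $(x_i - x_j)/d_{ij} = 1$. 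Thus each generator lies on the boundary of $B$, and convexity finishes this direction.

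For the direction $B \subseteq K$, I would argue by contrapositive using hyperplane separation. Suppose $\pmb\mu \in H_T$ is not in $K$. Since $K$ is a compact convex subset of $\R^T$, there exists $\pmb x \in \R^T$ with $\pmb x^\top \pmb\mu > \max_{v \in K} \pmb x^\top v$, and the right-hand side simplifies to $\max_{i \ne j} (x_i - x_j)/d_{ij}$ because linear functions are maximized on convex hulls at vertices. Call this quantity $\alpha$; note that because the maximum is over ordered pairs, both $(x_i - x_j)/d_{ij}$ and $(x_j - x_i)/d_{ji}$ appear, forcing $\alpha \ge 0$.

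Next I would rule out $\alpha = 0$: that would force $x_i \le x_j$ for every ordered pair, hence $\pmb x$ is a scalar multiple of $\pmb 1$, but then $\pmb x^\top \pmb\mu = 0$ since $\pmb\mu \in H_T$, contradicting $\pmb x^\top \pmb\mu > \alpha = 0$. So $\alpha > 0$, and I can replace $\pmb x$ by $\pmb x/\alpha$; using the symmetry $d_{ij} = d_{ji}$, the inequalities $x_i - x_j \le d_{ij}$ for all ordered $(i,j)$ upgrade to $|x_i - x_j| \le d_{ij}$, making the rescaled vector feasible in the LP defining $\|\pmb\mu\|_W^d$ with objective value strictly greater than $1$. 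This yields $\|\pmb\mu\|_W^d > 1$, so $\pmb\mu \notin B$, completing the inclusion.

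The main obstacle I anticipate is the bookkeeping in the separation step rather than any deep idea: one must be careful that the linear functional produced by separation in $\R^T$ behaves correctly on the subspace $H_T$ (here shift-invariance by $\pmb 1$ on both sides of the separating inequality handles it cleanly), and that the symmetry $d_{ij}=d_{ji}$ is invoked to pass from one-sided to absolute-value constraints before declaring feasibility of the rescaled $\pmb x$.
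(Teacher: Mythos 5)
Your proposal is correct, and on the substantive inclusion it takes a genuinely different route from the paper. The paper proves that every $\pmb\mu$ in the unit ball lies in the hull by an LP-duality/Farkas argument: it encodes the constraints as $W\pmb x\le\pmb v$, homogenizes, proves a separate claim that the homogenized system $S'$ is empty, and then invokes Farkas' lemma to extract explicit multipliers $\pmb\lambda\ge 0$ with $\pmb 1^\top\pmb\lambda=1$ and $\pmb\mu=A^\top\pmb\lambda$, i.e.\ explicit barycentric coordinates of $\pmb\mu$ in terms of the points $(\pmb e_i-\pmb e_j)/d_{ij}$. You instead argue by contraposition with strict hyperplane separation from the compact convex set $K$: you check $\alpha=\max_{i\ne j}(x_i-x_j)/d_{ij}\ge 0$, rule out $\alpha=0$ using $\pmb\mu\in H_T$ (the separating functional would be a multiple of $\pmb 1$), and rescale by $\alpha>0$ to get a vector feasible for the LP defining $\norm{\pmb\mu}{W}^d$ with value $>1$; this is clean, avoids the homogenization step and the auxiliary emptiness claim entirely, but it does not produce the explicit convex-combination coefficients, which the Farkas route hands you for free (and which are the $\lambda_i$ reused later in the paper's LP formulation). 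The trade-off is elementary geometry versus a constructive dual certificate; both are valid.

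One small repair in your easy inclusion $K\subseteq B$: you state correctly that it suffices to show each generator has norm at most $1$, but the computation you sketch (the feasible vector $x_a=d_{aj}$ attaining value $1$) only proves the lower bound $\norm{(\pmb e_i-\pmb e_j)/d_{ij}}{W}^d\ge 1$, i.e.\ that the generator is not interior. The bound you actually need is the one-line upper bound: for any feasible $\pmb x$ (so $\abs{x_i-x_j}\le d_{ij}$), one has $\langle(\pmb e_i-\pmb e_j)/d_{ij},\pmb x\rangle=(x_i-x_j)/d_{ij}\le 1$. With that sentence added (the paper dismisses this direction as "easily checked"), your argument is complete.
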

\begin{proof}{\informsproof}
For the Wasserstein ball of radius $1$ around $\bm 0\in\R^T$, we want to find those $\bm \mu\in H_T$ such that $\bm \mu^\top\bm x \le 1$ for every $\bm x\in H_T$ satisfying $A\bm x\le \bm 1, \bm 1^\top\bm x\le0, -\bm 1^\top \bm x\le 0$. Here $A$ is the matrix whose rows comprise of the constraint vectors $\frac{\bm e_i-\bm e_j}{d_{ij}}$, hence $A$ has $m = T(T-1)$ rows. Also, $\bm 1^\top\bm x\le0, -\bm 1^\top \bm x\le 0$ is same as $\bm 1^\top\bm x = 0$, which enforces $\bm x \in H_T$.

Take $W = \begin{bmatrix}
A\\\bm 1^\top\\-\bm 1^\top
\end{bmatrix}\in\R^{(m+2)\times T}$ and $\bm v = \begin{bmatrix}
    \bm 1\\0\\0
\end{bmatrix}\in \R^{m+2}$. Then, $A\bm x\le \bm 1, \bm 1^\top\bm x\le0, -\bm 1^\top \bm x\le 0$ can be written as $W\bm x \le \bm v$.

Fix such a $\bm \mu$ in the Wasserstein unit ball around $\bm 0$. Then $S \sett \set{\bm x\in\R^{T}\st W\bm x \le \bm v, \bm\mu^\top\bm x -1>0}$ must be empty. Alternately $S=\set{\bm x\in\R^{T}\st \begin{bmatrix}W&-\bm v\end{bmatrix}\begin{bmatrix} \bm x\\1\end{bmatrix} \le \bm 0, 
\begin{bmatrix}\bm\mu^\top&-1\end{bmatrix} \begin{bmatrix}\bm x\\1\end{bmatrix}>0} = \varnothing$. Define $B \sett \begin{bmatrix}
W^\top\\-\bm v^\top
\end{bmatrix}, \bm b\sett \begin{bmatrix}
    \bm \mu\\-1
\end{bmatrix}$, and homogenize $S$ to $S' \sett \set{\bm u\in\R^{T+1}\st B^\top\bm u \le \bm 0, \bm b^\top\bm u>0}$. Note that the last component of vector $\bm u$ in $S'$ is a free variable as opposed to the last component in $S$. So while $S$ is empty, we need a proof for the claim below:
\paragraph{Claim.}
 $S'=\varnothing$.

\begin{proof}{\informsproof}
We always denote $\bm u=\begin{bmatrix}\bm x\\y\end{bmatrix}$ with $\bm x\in\R^T, y\in\R$. Recall that $B^\top=\begin{bmatrix}W&-\bm v\end{bmatrix} = \begin{bmatrix}A&-\bm 1\\\bm 1^\top&0\\-\bm 1^\top&0\end{bmatrix}$. Suppose $\bm u\in S'$. Then the first $m$ rows of $B^\top \bm u\le 0$ gives the condition that $A\bm x \le y$, i.e., $x_i-x_j\le y\fa i\ne j$. Adding the inequalities for, say $(i,j)=(1,2),(2,1)$, gives $y\ge 0$. Consider the following two cases.
\begin{itemize}
\item $y=0$: The first $m$ rows give the condition $x_i-x_j\le 0\fa i\ne j$ and the last two rows give that $\bm 1^\top \bm x=0$. The former implies $x_i=x_j\fa i\ne j$. Combined with $\bm 1^\top \bm x=0$, we get $\bm x=\bm 0$. So $\bm u=0$ is the only candidate with last coordinate $0$ that satisfies $B^\top \bm u\le \bm 0$. But this does not satisfy $\bm b^\top\bm u>0$. So $S'$ has no element with last coordinate $0$.
\item $y>0$: Then the vector $\bm v = \frac{1}{y}\bm u = \begin{bmatrix}\frac1y\bm x\\1\end{bmatrix}\in S=\varnothing$. This is impossible. 
\end{itemize}
\informsendofproof \end{proof}

Then, $\set{\bm \alpha=\begin{bmatrix}
\bm \lambda\\r\\s
\end{bmatrix}\in \R^{m+2}\st B\bm \alpha =\bm b,\bm \alpha\ge 0}\ne\varnothing$ by Farkas lemma~\ref{lem:farkas}. Pick one such element $\begin{bmatrix}
\bm \lambda\\r\\s
\end{bmatrix}$ of this set. Recall $B = \begin{bmatrix}
A^\top&\bm 1&-\bm 1\\
-\bm 1^\top&0&0
\end{bmatrix}\in \R^{(T+1)\times (m+2)}$, $\bm b=\begin{bmatrix}
    \bm\mu\\-1
\end{bmatrix}\in\R^{T+1}$. The last row constraint gives $-\bm 1^\top\bm \lambda=-1$ so that $\bm 1^\top \bm \lambda =1$. The first $T$ row constraints give $A^\top\bm\lambda +(r-s)\bm 1 = \bm \mu$. But $\bm\mu\in H_T$. So $\bm 1^\top\bm \mu=0\implies (A\bm 1)^\top+(r-s)\bm1^\top\bm 1 = (r-s)T\implies r=s\implies \bm\mu=A^\top\bm \lambda$. Here we used the fact that $A\bm 1=\bm 0_{m}$ because each row of $A$ sums to $0$. A typical column of $A^\top$ looks like $\bm v_{ij}=\dfrac{\bm e_i-\bm e_j}{d_{ij}}$. This establishes that $\exists\bm\lambda \in (\R_{\ge0})^{m}$ indexed by $(i,j),i\ne j$, such that $\sum\limits_{j\ne i}\lambda_{ij}=1$ and $\bm \mu = \sum\limits_{j\ne i}\lambda_{ij}\bm v_{ij}$. It can be easily checked that the $\bm v_{ij}$'s satisfy our required criterion of the Wasserstein distance from $\bm 0$ being $1$. 
\informsendofproof \end{proof}
%hence $A$ has $m = T(T-1)$ rows. Also, $\bm 1^\top\bm x\le0, -\bm 1^\top \bm x\le 0$ is same as $\bm 1^\top\bm x = 0$, which enforces $\bm x \in H_T$.
%Take $W = \begin{bmatrix}
%A\\\bm 1^\top\\-\bm 1^\top
%\end{bmatrix}\in\R^{(m+2)\times T}$ and $\bm v = \begin{bmatrix}
%    \bm 1\\0\\0
%\end{bmatrix}\in \R^{m+2}$. Then, $A\bm x\le \bm 1, \bm 1^\top\bm x\le0, -\bm 1^\top \bm x\le 0$ can be written as $W\bm x \le \bm v$.
%Fix such a $\bm \mu$ in the Wasserstein unit ball around $\bm 0$. Define $B \sett \begin{bmatrix}
%W^\top\\-\bm v^\top
%\end{bmatrix}, \bm b\sett \begin{bmatrix}
%    \bm \mu\\-1
%\end{bmatrix}$. We show in the full proof that $S' \sett \set{\bm u\in\R^{T+1}\st B^\top\bm u \le \bm 0, \bm b^\top\bm u>0}$ is an empty set. 
%Then, by use of Farkas lemma, $\set{\bm \alpha=\begin{bmatrix}
%\bm \lambda\\r\\s
%\end{bmatrix}\in \R^{m+2}\st B\bm \alpha =\bm b,\bm \alpha\ge 0}\ne\varnothing$. Pick one such element $\begin{bmatrix}
%\bm \lambda\\r\\s
%\end{bmatrix}$ of this set. Recall $B = \begin{bmatrix}
%A^\top&\bm 1&-\bm 1\\
%-\bm 1^\top&0&0
%\end{bmatrix}\in \R^{(T+1)\times (m+2)}$, $\bm b=\begin{bmatrix}
%    \bm\mu\\-1
%\end{bmatrix}\in\R^{T+1}$. 
%The last row constraint gives $-\bm 1^\top\bm \lambda=-1$ so that $\bm 1^\top\lambda =1$. Then, after some algebraic manipulation, we get $\bm\mu=A^\top\bm \lambda$. A typical column of $A^\top$ looks like $ \frac{\bm e_i-\bm e_j}{d_{ij}}$. 
%This gives us the desired result. 
Following from the general result above, 
for our problem $d_{ij}$ represent distance between time points on the line. It is natural to thus consider the distance function to be given by $d_{ij}=\abs{i-j}.$ Then, the set of corners proposed in the above theorem are $T(T-1)$ in number. However, a further observation with the mentioned distance function yields that if $s<t\in[T]$ then $$\bm v_{st}=\frac{\bm e_t-\bm e_s}{d_{st}} = \frac{1}{t-s}\sum_{k=s}^{t-1}\left(\bm e_{k+1}-\bm e_k\right) = \frac{1}{t-s}\sum_{k=s}^{t-1}\bm v_{k+1,k}$$ and similar if $s>t$. This implies that $\bm v_{st}$ for $|s - t| > 1$ cannot be an extreme point of the polytope as it is a convex combination of other extreme points.  Thus, there are only $2(T-1)$ extreme points of the Wasserstein unit ball polytope, namely $\pm\bm v_{1,2},\cdots, \pm\bm v_{T-1,T}$. For the $\xi$  Wasserstein ball polytope, these extreme points will be just multiplied by $\xi$. As a consequence, using $\text{conv}$ to denote convex hull, we can write
\begin{align*}
\cP  = \big \{ \bm q \st \bm q = \hat{\bm p} + \bm\mu, \bm \mu \in  \text{conv}(\pm \xi \bm  v_{1,2},\cdots, \pm \xi \bm v_{T-1,T}), \bm q \geq 0 \big \}.
\end{align*}
Recall that~\ref{eq:dre} is a linear program (LP) with the constraint as $\cP$. Then, there are two cases.

\textbf{Case 1}: $\hat{\bm p} + \bm\mu \in \Delta_T \fa \bm \mu \in \text{conv}(\pm\xi\bm v_{1,2},\cdots, \pm\xi\bm v_{T-1,T})$. This case is illustrated by the polytope with center $D$ in Figure~\ref{fig:normball}. An LP has optimum at one of the extreme points of its constraint set.
Hence, while solving an LP on this polytope, one needs to compute the objective value of the program only at the $\cO(T)$ extreme points. In other words, when optimizing over $\cP$, where $\xi$ is small enough to make each extreme points $\hat{\bm p} \pm \xi(\bm e_{i}-\bm e_{i+1})\in \Delta(T)$ for $i \in [T-1]$, we only have to evaluate the objective at these extreme points and choose the maximum of the evaluated values. Noting the similarity of this to the robust estimation case, and from Lemma~\ref{lem:robustruntime}, it is straightforward to check that the runtime is $\cO(n^\omega \log T + n^{2 + \epsilon} + \sqrt{T} n^2/(\epsilon \log n)^2 + nT )$.

\textbf{Case 2}: For some $\bm \mu \in \text{conv}(\pm\xi\bm v_{1,2},\cdots, \pm\xi\bm v_{T-1,T})$, $\hat{\bm p} + \bm\mu \notin \Delta_T$. This case is illustrated by the polytope with center $E$ in Figure~\ref{fig:normball}. In this case the number of extreme points can be large, but we can write this problem as an LP as follows:
\begin{align}
\max_{\bm q \in \R^T, \lambda_i \in \R} & \; \sum_{t \in \mathcal{T}}  q_t \inner { \bm c} {\bm x_t } \qquad \text{s.t.} \qquad  \bm q \geq 0,\; \sum_i \lambda_i = 1, \; \lambda_i \geq 0 \;  \forall i\;, \label{eq:LP}\\
 & \;\; \bm q = \hat{\bm p} + \lambda_1 \xi \bm v_{1,2} + \ldots + \lambda_{T-1} \xi \bm  v_{T-1,T} +  \lambda_{T} \bm (-\xi \bm  v_{1,2}) + \ldots  \lambda_{2(T-1)} (-\xi \bm v_{T-1,T}) .\nonumber
%& \;\; \label{eq:LP} \bm q \geq 0,\; \sum_i \lambda_i = 1, \; \lambda_i \geq 0 \;  \forall i
\end{align}
The above LP has $\cO(T)$ number of optimization variables, and is represented in $L$ bits, where $L$ is $\cO(T)$ assuming a fixed number of bits for each constant and variable. Then, an LP requires $\cO(n^{3.5} L^2)$ time for solving exactly, which can be improved to $\cO(n^{3.5} L)$ runtime if we allow for any arbitrary approximation due to~\cite{karmarkar1984new}. Note that we also need to compute the $\langle \bm c, \bm x_t \rangle$ for all $t \in \cT$, which is the same as robust estimation case. Thus, considering the dominating terms, we obtain an overall complexity of $\cO(n^{3.5} T^2)$ in the exact case and $\cO(n^{3.5} T)$ with arbitrary approximation.

%A LP has optimum at one of the extreme points.
%Hence, while solving an LP on this polytope, one needs to compute the objective value of the program only at the $\cO(T)$ extreme points. In other words, when optimizing over $\cP$, where $\xi$ is small enough to make each extreme points $\hat{\bm p} \pm \xi(\bm e_{i}-\bm e_{i+1})\in \Delta(T)$ for $i \in [T-1]$, then we only have to evaluate the objective at these extreme points and choose the maximum of the evaluated values. As evaluating dot products is $O(n)$, the total computation time is thus $O(nT)$.

% \begin{figure}[h]
% \includegraphics[scale=0.2]{ball_in_3d.png}
% \caption{Wasserstein balls with {\color{red}$r=0.3$ centered at $E=(0.83,0.13,0.04)$} and {\color{blue}$r=0.2$ centered at $D=(0.24,0.35,0.41)$}.}
% \end{figure}

\subsection{Infinite Support: Robust Cost Estimation}
With an infinite $\cT$, the methods developed till now do not apply as they require computing over $|\cT|$ steps. Thus, we explore a different set of methods for the infinite case. 
%Again, WLOG and to avoid cumbersome notation, we assume that set $\mathcal{T}$ starts at $1$, that is, $\mathcal{T} = \{1, 2, \ldots,\}$. 
%\paragraph{General $n$.}
Assume $M=PJP^{-1}$ where $J$ is in its reduced real Jordan form in Equation~\ref{jordan}. WLOG let these blocks be ordered such that $r_1<\cdots<r_q$ and $\abs{\lambda_1}<\cdots<\abs{\lambda_p}$.
%from the paragraph after Equation~\ref{jordan}. 
We also know that all these magnitudes are strictly less than one ($r_i, |\lambda_j|<1$) as our problem statement handles GAS systems. As stated after Equation~\ref{jordan}, $J^t = \text{diag}(r_1^t J_1^t, \ldots, \lambda_p^t)$
where the $i^{\text{th}}$ $2\times 2$ block $J_i^t = r_i^t\begin{bmatrix}\cos t\theta_i&-\sin t\theta_i\\\sin t\theta_i&\cos t\theta_i\end{bmatrix}$.
We consider $\inner{\bm c}{PJ^tP^{-1}\bm x}$ as a function of $t$ which is $\sum_{i=1}^qr_i^t(u_i\cos t\theta_i-v_i\sin t\theta_i) +\sum_{j=1}^p w_j\lambda_j^t$ where $u_i,v_i,w_j$ only depend on $\bm c,\bm x, P$ (please see the details of the standard but tedious steps in Appendix~\ref{sec:misc}). Let $(u_i,v_i) =  (d_i \cos \eta_i, d_i \sin\eta_i)$ where $\eta_i\in[0,2\pi)$ and $d_i=\sqrt{u_i^2+v_i^2}$. So the expression we consider is 
\begin{equation} \label{eq:gt}
    g(t) = \inner{\bm c}{PJ^tP^{-1}\bm x} = \sum_{i=1}^q d_ir_i^t\cos(t\theta_i+\eta_i) + \sum_{j=1}^p w_j\lambda_j^t. 
\end{equation}
Note that the~\ref{eq:re} problem here is to solve $\ds\max_{t \in \N} g(t)$. 
\paragraph{Main Approach.} We first start by finding a $t_0$ such that $g(t_0) > 0$. Then, since our problem is for a GAS system with $r_i, |\lambda_j|  < 1$, we know that $g(t)$ converges to $0$ as $t \to \infty$. Thus, from basic real analysis, there must exists a $n_0$ such that $\forall t' > n_0$, $|g(t')| < g(t_0) $. Our approach is to compute $g(t)$ for $t = 1$ to $n_0$ and choose the maximum among these. Next, we show a construction of $n_0$, given $t_0$.

\begin{proposition} \label{prop:n0}
Given $t_0$, for any positive integer $n_0$ more than $ \Big\lceil\log_{\zeta}{\frac{g(t_0)}{\sum\abs{d_i}+\sum \abs {w_j}}}\Big\rceil$, where $\zeta \sett \max\set{\abs{\lambda_p},r_q}$, we have $\forall t' > n_0, |g(t')| < g(t_0)$.
\end{proposition}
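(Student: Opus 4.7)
The plan is to prove the bound via a direct application of the triangle inequality together with the fact that $\zeta < 1$. The idea is simple: every term in $g(t')$ decays at rate at most $\zeta^{t'}$, so once $\zeta^{t'}$ is driven small enough, $|g(t')|$ must drop below the positive value $g(t_0)$.

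First, I would apply the triangle inequality and $|\cos(\cdot)| \le 1$ to the given closed form of $g(t')$ to obtain
$$|g(t')| \;\le\; \sum_{i=1}^q |d_i|\, r_i^{t'} + \sum_{j=1}^p |w_j|\, |\lambda_j|^{t'}.$$
Then, since by assumption $r_i \le r_q \le \zeta$ and $|\lambda_j| \le |\lambda_p| \le \zeta$ for every $i,j$, and $t' \ge 1$, I would replace each $r_i^{t'}$ and $|\lambda_j|^{t'}$ by $\zeta^{t'}$ to factor out
$$|g(t')| \;\le\; \zeta^{t'}\!\left(\sum_{i=1}^q |d_i| + \sum_{j=1}^p |w_j|\right) = \zeta^{t'} C,$$
where $C := \sum_i |d_i| + \sum_j |w_j| > 0$.

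Next, I would solve for when this upper bound is strictly less than $g(t_0)$. The inequality $\zeta^{t'} C < g(t_0)$ is equivalent to $\zeta^{t'} < g(t_0)/C$; taking $\log_\zeta$, which reverses inequalities since $\zeta \in (0,1)$, yields the condition $t' > \log_\zeta(g(t_0)/C)$. Hence any integer $n_0 \ge \lceil \log_\zeta(g(t_0)/C) \rceil$ ensures $|g(t')| < g(t_0)$ for all $t' > n_0$, which is exactly the claim.

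One small sanity check I would include is that the ceiling expression is a well-defined positive integer. Applying the same triangle-inequality bound at $t_0$ gives $g(t_0) \le \zeta^{t_0} C < C$, so $g(t_0)/C \in (0,1)$, and therefore $\log_\zeta(g(t_0)/C) > 0$ because $\zeta < 1$. I do not foresee any real obstacle; the only care needed is to keep track of the direction of inequalities when passing through $\log_\zeta$ with base less than one.
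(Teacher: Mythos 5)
Your proof is correct and is essentially the same as the paper's: bound $|g(t')|$ by the triangle inequality with $|\cos(\cdot)|\le 1$, dominate every decay rate by $\zeta$, and invert $\zeta^{t'}C < g(t_0)$ via $\log_\zeta$ (with the inequality flip for base $\zeta<1$). The added sanity check that $g(t_0)/C \in (0,1)$ is a nice touch but not needed for the claim.
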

\begin{proof}{\informsproof}
$\zeta = \max\set{\abs{\lambda_p},r_q} < 1$. For $t' > n_0 \geq \log_{\zeta}{\frac{g(t_0)}{\sum\abs{d_i}+\sum \abs {w_j}}}$, since $\zeta < 1$, we have $\ds\zeta^{t'} < \frac{g(t_0)}{\sum\abs{d_i}+\sum \abs {w_j}}$ whence $\ds\abs{g(t')} \le \sum \abs{d_i}r_i^{t'} +\sum \abs{w_j\lambda_j^{t'}} \le  \left(\sum\abs{d_i}+\sum \abs {w_j}\right) \zeta^{t'} < g(t_0)$.
\informsendofproof \end{proof}

The correctness of the approach is not hard to see. Since we already found $t_0$ with $g(t_0) > 0$ and $g(t') < g(t_0) $ for $t' > n_0$ (note that $n_0 \geq t_0$ is implied), then any time step $t^*$ for which $g(t^*)$ is maximum it must be such that $1 \leq t^* \leq n_0$. Also, there is a special case when there is no such $t_0$ that can be found, and then $g$ takes the maximum value of $0$ at $\infty$. Next, we explain how to find such a $t_0$ or infer that there is no such $t_0$. We consider two cases, based on which one among $r_q$ or $|\lambda_p|$ is dominating for large $t$ in function $g$.

\textbf{Case 1}: $r_q<\abs{\lambda_p}$. Then $w_p\lambda_p^t$  is the dominating term in $g$. Dividing $g(t)$ by $\abs{\lambda_p}^t$ we obtain
%\begin{align*}
$$\frac{g(t)}{\abs{\lambda_p}^t} = \sum_{i=1}^{q}d_i\big(\frac{r_i}{\abs{\lambda_p}}\Big)^t\cos(t\theta_i+\eta_i)+ 
%\\&~~~~~~~~~~
\sum_{j=1}^{p-1} w_j\Big(\frac{\lambda_j}{\abs{\lambda_p}}\Big)^t + w_p(\sgn \lambda_p)^t.$$
%\end{align*}
We show that the LHS $\frac{g(t)}{\abs{\lambda_p}^t}$ gets quite close to the dominating term $w_p\lp\sgn \lambda_p\rp^t$ for large $t$; using triangle inequality we quantify an estimate of the error $\abs{\frac{g(t)}{\abs{\lambda_p}^t}-w_p(\sgn \lambda_p)^t} $ as follows:
\begin{proposition}\label{prop:tightness}
$\abs{\frac{g(t)}{\abs{\lambda_p}^t}-w_p(\sgn \lambda_p)^t} \le\beta \epsilon^t$ with  $\ds\epsilon \sett \frac{\max \{r_i,\cdots,r_q,\abs{\lambda_1},\cdots,\abs{\lambda_{p-1}} \}}{\abs{\lambda_p}} < 1$ and $\ds\beta\sett \sum_{i=1}^q\abs{d_i}+\sum_{j=1}^{p-1}\abs{w_j}$.
\end{proposition}

%\begin{proof}[Proof Sketch]
%    Follows from triangle inequality.
%\end{proof}
%By $\Delta-$inequality, $\abs{\frac{g(t)}{\abs{\lambda_p}^t}-w_p\left(\sgn \lambda_p\right)^t} \le\beta \epsilon^t$ with $\beta\sett \sum\limits_{i=1}^q\abs{d_i}+\sum\limits_{i=1}^{p-1}\abs{w_i}$ and $\epsilon \sett \frac{\max\set{r_1,\cdots,r_q,\abs{\lambda_1},\cdots,\abs{\lambda_{p-1}}}}{\abs{\lambda_p}} < 1$. 
%We thus have the following upper bound on $\inf \set{t\in\N\st g(t)>0}$ (in some cases). 
With this approximation and error in mind, we present upper bounds on $t_0$. The core idea is to bound $\beta \epsilon^t$ for large $t$, which is possible as $\epsilon < 1$. Based on this idea, we prove the following lemma for different signs of $w_p$ and $\lambda_p$. 

\begin{lemma}\label{lem:cutoff} Let $\epsilon,\beta$ be the same as in Proposition~\ref{prop:tightness}. For the first three cases below, there is a cutoff beyond which a $t_0$ can be found and an explicit formulation for $t_0$ exists. The final case yields a cutoff beyond which such a $t_0$ does not exist.
\begin{itemize}
\item $w_p\!>\!0,\lambda_p\!>\!0:$ $\forall t > \log_\epsilon\big(\frac{w_p}{\beta}\big)$, $g(t) > 0$. Thus, $t_0 = 1+\max \big\{\big\lceil \log_\epsilon\big(\frac{w_p}{\beta}\big) \big \rceil , 1\big\}$.
\item $w_p\!>\!0,\lambda_p\!<\!0:$ $\forall t > \frac12\log_\epsilon\big(\frac{w_p}{\beta}\big)$, $g(2t) > 0$. Thus, $t_0 = 2+2\max\big\{ \big\lceil \frac{1}{2}\log_\epsilon\big(\frac{w_p}{\beta}\big) \big \rceil , 1\big\}$.
\item $w_p\!<\!0,\lambda_p\!<\!0:$ $\forall t > \frac12\log_\epsilon\big(\frac{-w_p}{\beta}\big)$, $g(2t+1) > 0$. Thus, $t_0 = 2\max\big\{\big\lceil \frac{1}{2}\log_\epsilon\big(\frac{-w_p}{\beta}\big) \big \rceil , 1\big\} + 1 $.
\item $w_p\!<\!0,\lambda_p\!>\!0 :$ $\forall t>\log_\epsilon\big(\frac{-w_p}{\beta}\big)$, $g(t)<0$.
\end{itemize}
In the final case, instead of searching for $t_0$, we can directly search for $n_0$: search among $1\le n_0 \le \big\lfloor \log_\epsilon\big(\frac{-w_p}{\beta}\big)\big\rfloor$ for where $g(n_0)$ is positive and is maximized. If the search returns empty (meaning for all $t$, $g(t) < 0)$, then no $t_0$ or $n_0$ exists for this last case and $g$ is maximized at $\infty$ with value $0$. 
\end{lemma}
\begin{proof}{\informsproof}
We prove each bullet point respectively as follows.
\begin{itemize}
\item $w_p>0,\lambda_p>0$: 
$t_0 > \log_\epsilon\left(\frac{w_p}{\beta}\right) \implies \epsilon^{t_0} < \frac{w_p}{\beta}$. The last implication is because $\epsilon<1$. Since $w_p,\lambda_p>0$, Proposition~\ref{prop:tightness} gives $\abs{\frac{g(t_0)}{\abs{\lambda_p}^{t_0}}-w_p} < w_p$. This means $\frac{g(t_0)}{\abs{\lambda_p}^{t_0}} > 0$ whence $g(t_0) > 0$.
\item $w_p>0,\lambda_p<0$: 
We use a similar reasoning as above. If $t_0$ is even and $\frac{t_0}{2} > \frac12 \log_\epsilon\left(\frac{w_p}{\beta}\right)$, that is, $t_0 > \log_\epsilon\left(\frac{w_p}{\beta}\right)$ then Proposition~\ref{prop:tightness} gives us $\abs{\frac{g(t_0)}{\abs{\lambda_p}^{t_0}} -  w_p} \le \beta\epsilon^{t_0} < w_p$ which again implies that $g(t_0)>0$.
\item $w_p<0,\lambda_p<0$: 
Again, use similar reasoning as the first bullet point. $t_0$ being odd here ensures that $w_p(\sgn \lambda_p)^{t_0} = -w_p > 0$. Here we again have $t_0 > \log_\epsilon\left(\frac{-w_p}{\beta}\right)$. Proposition~\ref{prop:tightness} turns into $\abs{\frac{g(t_0)}{\abs{\lambda_p}^{t_0}}+w_p} < -w_p$ whence $g(t_0) > 0$.
\item $w_p<0,\lambda_p>0$: If $t>\log_\epsilon\left(\frac{-w_p}{\beta}\right)$ then $\epsilon^t<\frac{-w_p}{\beta}$ as earlier. For such $t\in\N$, Proposition~\ref{prop:tightness} gives $\frac{g(t)}{\abs{\lambda_p}^t} - w_p \le \beta\epsilon^t < -w_p$ whence $g(t) < 0$. So if $g(t) < 0$ for each $t\le t_0 \sett \left\lfloor \log_\epsilon\left(\frac{-w_p}{\beta}\right)\right\rfloor$ then $g$ is always negative on $\N$.
\end{itemize}
The claim of searching to find $n_0$ in the final case is straightforward from the description.
\informsendofproof \end{proof}

%\begin{proof}[Proof Sketch]
%    We briefly show why the first bullet point holds and the proofs for the rest follow similarly, with the consideration of even and odd numbers in the second and third bullet point to control $(\sgn\lambda_p)^t$. If $w_p,\lambda_p$ are both positive and $t>\log_\epsilon\left(\frac{w_p}{\beta}\right)$ then, as $\epsilon<1$, it follows that $\epsilon^t<\frac{w_p}{\beta}$ whence $w_p-\beta\epsilon^t>0$. But we know that $\abs{\frac{g(t)}{|\lambda_p|^t}-w_p} \le \beta\epsilon^t$ which combined with the previous conclusion gives $\frac{g(t)}{|\lambda_p|^t}>0$,  which gives $g(t)>0$.
%\end{proof}

%Now that we have a procedure to find a positive occurrence of $g$ on $\N$, we will use this number as a cutoff to obtain a new cutoff $N^*$ with the property $\sup\limits_{t\in \N}g(t) = \sup\limits_{t\in [N^*]}g(t)$.

%In the above, we have given algorithms

\textbf{Case 2}: $r_q>\abs{\lambda_p}$. Similar to case 1, we consider
%\begin{align*}
$$\frac{g(t)}{r_q^t} = \sum_{i=1}^{q-1}d_i\Big(\frac{r_i}{r_q}\Big)^t\cos(t\theta_i+\eta_i)+ %\\&~~~~~~~~~~
\sum_{j=1}^{p} w_j\Big(\frac{\lambda_j}{r_q}\Big)^t + d_q\cos(t\theta_q+\eta_q).$$
%\end{align*} 
We again characterize the long-range behavior of the function with respect to $t$ using the triangle inequality, but the dominating term being $d_q\cos(t\theta_q+\eta_q)$ here.
\begin{proposition}\label{prop:tight2}
$\abs{\frac{g(t)}{r_q^t}-d_q\cos(t\theta_q+\eta_q)} \le \gamma C^t$ with $\ds C \sett \frac{\max\{r_1,\cdots,r_{q-1},\abs{\lambda_1},\cdots,\abs{\lambda_{p}} \}}{r_q} < 1$ and $\ds\gamma \sett \sum_{i=1}^{q-1} \abs{d_i} + \sum_{j=1}^p\abs{w_j}$. 
\end{proposition}

Unlike the previous case, the dominating term here is a cosine function with a certain phase $\eta_q$ and frequency $\theta_q$. In the previous case we could easily control the term dependent on $t$ (which was $\lp\sgn \lambda_p\rp^t$) by simply considering the two cases of the sign of $\lambda_p$. This is significantly harder in this case because of the behavior of cosine terms for input angles that are \emph{integral multiples} of the frequency $\theta_q$. To avoid irrationality issues (for example arithmetic modulo $\pi$), \textit{we measure angles in degrees}. We show that depending on the phase and frequency, one can carefully control the integers $t$ in order to get the expected behavior. More technically, we have the following bound for $t_0$ in this case.

\begin{lemma} \label{lem:cutoffcomplex}
Given angles are measured in degrees and $\theta_q = a/b \in\Q$ with $a,b\in\Z_{>0}$ and $\gcd(a,b)=1$, 
%then $\exists k \in\Z_{>0}$ s.t. $\cos(k \theta_q+\eta_q) \ge 0.5$ and $k'\in\Z_{>0} $ $\cos(k' \theta_q+\eta_q) \le -0.5$, which can be found in $\cO(\log(\min(a,b)))$. 
there exist integers $n\ne 0, l$ such that $an+360bl = \gcd(360,a) = g$, computable in $\cO(\log(\min(a,b)))$ time. Let $C,d_q,\gamma$ be as in Proposition~\ref{prop:tight2}. Then, for any integer $p \in (0, 90b)$,  $t_0$ is upper bounded by
$$\frac{n(p-bc)+\abs{n}\lp s_0+ \max\set{1,\lceil D \rceil }\rp360b }{ g},~~~\text{where}$$  
$s_0 = 1+\max\Big\{0,\Big\lceil\frac{\sgn(n)(cb-p)}{360}\Big\rceil\Big\}$, $D + s_0 = \frac{g\log_C\big(\frac{\abs{d_q}}{2\gamma}\big) - n(p-cb)}{360b\abs n}$  and $c = 135+ \lfloor \eta \rfloor -\sgn(d_q)\cdot 90$.
\end{lemma}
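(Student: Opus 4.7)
The plan is to use Proposition~\ref{prop:tight2} to reduce the proof of $g(t_0)>0$ to producing a positive integer $t_0$ with two properties: (i) the reduced phase $t_0\theta_q+\eta_q\pmod{360}$ lies in an arc where $\sgn(\cos)=\sgn(d_q)$ and $|\cos|>1/2$; and (ii) $\gamma C^{t_0}\le|d_q|/2$. Together these force $\frac{g(t_0)}{r_q^{t_0}}\ge d_q\cos(t_0\theta_q+\eta_q)-\gamma C^{t_0}>\frac{|d_q|}{2}-\frac{|d_q|}{2}=0$, so $g(t_0)>0$ since $r_q^{t_0}>0$. The rest of the proof explicitly constructs such a $t_0$.

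\textbf{Phase control via B\'ezout.} Because $\theta_q=a/b$ in degrees with $\gcd(a,b)=1$, we have $\gcd(a,360b)=\gcd(a,360)=g$, so the extended Euclidean algorithm yields integers $n\ne 0$ and $l$ with $an+360bl=g$ in $\cO(\log\min(a,b))$ arithmetic operations. Substituting the stated formula for $t_0$ into $t_0\theta_q=t_0a/b$ and reducing modulo $360$ via $an\equiv g\pmod{360b}$, the phase $t_0\theta_q+\eta_q$ reduces to $p/b-c+\eta_q$. Expanding $c=135+\lfloor\eta_q\rfloor-\sgn(d_q)\cdot 90$ and writing $\{\eta_q\}:=\eta_q-\lfloor\eta_q\rfloor\in[0,1)$, this equals $p/b-135+\sgn(d_q)\cdot 90+\{\eta_q\}$, which for $p\in(0,90b)$ lies in $(-45^\circ,46^\circ)\subset(-60^\circ,60^\circ)$ when $d_q>0$ and in $(135^\circ,226^\circ)\subset(120^\circ,240^\circ)$ when $d_q<0$. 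On both arcs $|\cos|>1/2$ with sign matching $\sgn(d_q)$, establishing (i).

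\textbf{Magnitude, positivity, and main obstacle.} The defining equation for $D+s_0$ rearranges---after substituting into the formula for $t_0$ and using $\max\{1,\lceil D\rceil\}\ge D$---to $t_0\,g\ge g\log_C(|d_q|/(2\gamma))$. Since $0<C<1$ makes $\log_C$ decreasing, this gives $C^{t_0}\le|d_q|/(2\gamma)$ and hence $\gamma C^{t_0}\le|d_q|/2$, establishing (ii). Positivity of $t_0$ is secured by $s_0=1+\max\{0,\lceil\sgn(n)(cb-p)/360\rceil\}$, chosen precisely so that $|n|s_0\cdot 360b$ strictly dominates any possible negativity in $n(p-bc)$; the guard $\max\{1,\lceil D\rceil\}$ further enforces $t_0\ge 1$ when $D\le 0$. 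Integrality of $t_0$ holds since $g\mid 360b$ is automatic and $p$ can be chosen in the interval $(0,90b)$ with $p\equiv bc\pmod{g/\gcd(n,g)}$. The main obstacle is this triple balancing---correct phase modulo $360^\circ$, sufficient size to kill the error term $\gamma C^{t_0}$, and integer positivity of $t_0$---which the intricate formula is engineered to discharge simultaneously; the technical content of the proof is the verification of each of the three clauses in turn, after which Proposition~\ref{prop:tight2} closes the argument.
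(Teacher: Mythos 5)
Your overall route is the same as the paper's: use the B\'ezout relation $an+360bl=g$ to steer the phase $t_0\theta_q+\eta_q$ into an arc where the cosine has sign $\sgn(d_q)$ and magnitude $>1/2$, make $t_0$ large enough that $\gamma C^{t_0}\le |d_q|/2$, and close with Proposition~\ref{prop:tight2}. Your magnitude bound (clause (ii)), the arc computation, and the positivity bookkeeping via $s_0$ and $\max\{1,\lceil D\rceil\}$ are all fine.

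However, the phase-control step has a genuine gap. The congruence $t_0\theta_q \equiv p/b - c \pmod{360}$ does not follow from $an\equiv g \pmod{360b}$ alone: writing $an = g - 360bl$, one gets $t_0 a \equiv (p-bc) - 360b\cdot\frac{l(p-bc)}{g} \pmod{360b}$, so your reduction needs $g \mid l(p-bc)$ --- in effect the divisibility $g\mid(p-bc)$ that makes the Diophantine equation $ka-360bm=p-cb$ solvable in integers, which is precisely the condition the paper's proof imposes on $p$. Your substitute condition $p\equiv bc \pmod{g/\gcd(n,g)}$ only guarantees integrality of $t_0$, not the phase. Concretely, take $a=16$, $b=1$ (so $\theta_q=16^\circ$ and $g=8$); extended Euclid gives $n=-22$, $l=1$, hence $\gcd(n,g)=2$, and your condition allows $p-bc=4$. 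Then $t_0=-11+990N$ (with $N=s_0+\max\{1,\lceil D\rceil\}$) is a positive integer, but $t_0 a=-176+15840N\equiv 184 \pmod{360}$, so the phase lands near $184^\circ$ instead of the intended $4^\circ$: it is shifted by $180^\circ$, the cosine has the wrong sign, and the conclusion $g(t_0)>0$ fails. The repair is to require $g\mid(p-bc)$ (and argue that such an integer $p\in(0,90b)$ can be chosen), exactly as in the paper's argument; with that strengthening your remaining steps go through unchanged.
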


\begin{proof}{Proof Sketch}
We show in the full proof that $\exists k \in\Z_{>0}$ s.t. $\cos(k \theta_q+\eta_q) \ge 0.5$ and $k'\in\Z_{>0} $ $\cos(k' \theta_q+\eta_q) \le -0.5$. The full proof involves the use of the number theory results including Bezout's identity, parameters of which can be computed by Extended Euclid's algorithm, which takes $\cO(\log(\min(a,b)))$ in our case. Then, we use these results for the two different possible sign of $d_q$ in Proposition~\ref{prop:tight2} to get the bound above.
\informsendofproof \end{proof}

%It is a well-known fact from elementary number theory that $\set{t\theta + \eta\pmod{2\pi}}_{t\in\N}$ is dense in $[0,2\pi)$ for fixed $\theta\in\Q$, $\eta\in\R$. Let $\set{t_s}_{s\in\N}$ be a sequence of increasing positive integers such that $\abs{(t_s\theta+\eta)\pmod{2\pi} - \frac{1}{10}} < \frac{1}{s}$. Now $s\ge 10\implies (t_s\theta+\eta)\pmod{2\pi} \in (0,1/5)\implies \cos(t_s\theta+\eta) \in (\cos(1/5),1) \subseteq \left(\frac{49}{50},1\right)$. We used  $\cos x \ge 1-\frac{x^2}{2}$ to show the last containment. In fact for $\ds t_s>\log_C\left(\frac{49}{50\abs{\gamma}}\right)$ and $s\ge 10$, we have $g(t_s) > 0$. We note that this does not cover our analysis for $n=2$ because this part only claims about existence, but we will show a construction for $n=2$.

\textbf{Hardness}: The above results show that the steps required depend on the input problem, and can be arbitrarily large (e.g., for $\zeta$ nearly $1$ in Prop.~\ref{prop:n0}). We formalize this by a NP Hardness result. Before stating the result, we present a brief discussion about the decision version of an optimization problem and why both are equivalent when considering nondeterministic polytime (NP) hardness. Formally, an optimization problem has a polytime algorithm iff its corresponding decision version has a polytime algorithm~\citep{cormen2022introduction}. In brief, for an optimization problem of $\max F(x)$ with the constraint that $x\in X$, 
%The inputs are an algebraic description of $F,X$. Its 
the corresponding decision version is to decide whether there exists $x\in X$ satisfying $F(x) \ge \alpha$ for given input $\alpha\in \R$.
%, given an algebraic description of inputs $\alpha\in \R, F,X$. 
In fact, if there is a polytime (in size of inputs) algorithm $\cA$ to answer the latter, for any inputs $\alpha, F,X$, then given any additive accuracy parameter $\epsilon>0$, one can use bisection (binary search) by calling $\cA$ about $\cO\lp\log \frac1\epsilon\rp$ times (overall still polytime) in order to solve the optimization problem upto additive $\epsilon-$accuracy. 
%So we have designed an algorithm to solve the optimization problem upto $\epsilon-$additive accuracy in time $\cO\lp\log\frac1\epsilon \cdot \text{poly}\lp\abs F, \abs X, \abs \alpha\rp\rp$ where $\abs F, \abs X, \abs \alpha$ denote the input sizes for the descriptions of $F,X,\alpha$ respectively. 
The other way is not hard to see and can be found in textbooks~\citep{cormen2022introduction}.

In this work, the decision version of \ref{eq:re} asks whether there exists $t\in \N$ such that $\inner{\bm c}{M^t\bm x} \ge \alpha$, given inputs $n\in\N,\bm c\in \Q^n, \bm x\in \Q^n, M\in \Q^{n\times n}, \alpha\in \Q$ with $\rho(M)<1$. We use $\Q$ instead of $\R$ to stick to the Turing model of computation. Indeed, the next NP Hardness result shows that the~\ref{eq:re} problem for infinite support is not easy in general, unless $\text{P} = \text{NP}$.

\begin{theorem} \label{thm:nphard}
Given $n\in\N$, $M\in\Q^{n\times n}$ with $\rho(M)<1$, $\bm x\in \Q^n$, $\bm c\in \Q^n$, $\alpha\in\Q$, it is NP-hard to decide whether there exists $t\in \N$ such that $\inner{\bm c}{M^t\bm x} \ge \alpha$.
\end{theorem}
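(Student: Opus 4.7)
The plan is to give a polynomial-time many-one reduction from the Simultaneous Incongruences Problem (SIP), a classical NP-complete problem: given positive integers $m$ and pairs $(a_i, b_i)_{i=1}^n$ with $0 \le b_i < a_i$, decide whether some $x \in \{0, \ldots, m-1\}$ satisfies $x \not\equiv b_i \pmod{a_i}$ for every $i$. I will work with the variant produced by the standard 3-SAT-to-SIP reduction, in which the $a_i$'s are pairwise coprime (distinct primes of polynomial size) and $m = \prod_i a_i = \mathrm{lcm}(a_1,\ldots,a_n)$. This keeps the whole instance polynomial-sized and, by the Chinese Remainder Theorem, ensures the joint residue pattern $t \mapsto (t \bmod a_1, \ldots, t \bmod a_n)$ is periodic in $t \in \N$ with period exactly $m$.

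Given such an SIP instance, I build the block-diagonal rational matrix
\[
M \;\sett\; r \cdot \mathrm{diag}\bigl(1,\, C_{a_1},\, C_{a_2},\, \ldots,\, C_{a_n}\bigr) \;\in\; \Q^{k\times k},
\]
where $k = 1 + \sum_i a_i$, $C_a \in \Q^{a\times a}$ denotes the cyclic-shift permutation (characteristic polynomial $x^a - 1$, satisfying $C_a^t \pmb e_0 = \pmb e_{t \bmod a}$), and $r \sett 1 - 1/(2n(m-1)) \in \Q$. Then $\rho(M) = r < 1$ and every entry has $O(\log(nm))$ bits. Take
\[
\pmb a \sett (1,\, \pmb e_0^{(1)},\, \ldots,\, \pmb e_0^{(n)})^\top, \quad \pmb c \sett (n,\, -\pmb e_{b_1}^{(1)},\, \ldots,\, -\pmb e_{b_n}^{(n)})^\top, \quad \alpha \sett n - \tfrac{3}{4}.
\]
A direct block-by-block computation gives the key identity
\[
\inner{\pmb c}{M^t \pmb a} \;=\; r^t \cdot \bigl|\{i : t \not\equiv b_i \pmod{a_i}\}\bigr|.
\]

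For correctness, consider the two cases. In the YES case, any SIP witness $x^* \in \{0,\ldots,m-1\}$ gives $\inner{\pmb c}{M^{x^*}\pmb a} = r^{x^*} n \ge r^{m-1} n \ge (1 - \tfrac{1}{2n}) n = n - \tfrac{1}{2} > \alpha$, where the lower bound on $r^{m-1}$ comes from Bernoulli's inequality applied to $r = 1 - 1/(2n(m-1))$. In the NO case, periodicity of the residue pattern with period $m$, combined with the absence of a SIP solution in $\{0,\ldots,m-1\}$, implies that every $t \in \N$ satisfies $t \equiv b_i \pmod{a_i}$ for at least one $i$; hence $\inner{\pmb c}{M^t \pmb a} \le r^t (n-1) \le n - 1 < \alpha$ for every $t$.

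The main obstacle is the balancing act imposed by the GAS constraint $\rho(M) < 1$: since $r^t$ decays, I must choose $r$ close enough to $1$ for $r^{m-1} n$ to sit above the NO ceiling $n-1$, yet keep $r$ representable in polynomially many bits. The specific choice $r = 1 - 1/(2n(m-1))$ threads this needle and leaves a constant-size gap between the two regimes. A secondary subtlety, which dictates which variant of SIP I reduce from, is that the NO-case bound has to hold for \emph{all} $t \in \N$: if $m$ were not a multiple of $\mathrm{lcm}(a_i)$, some $t \ge m$ could satisfy every incongruence and inflate $\inner{\pmb c}{M^t \pmb a}$ above $\alpha$, breaking soundness. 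The prime-based SIP instance with $m = \mathrm{lcm}(a_i) = \prod_i a_i$ rules this out via full periodicity, so the reduction is polynomial in size and correct on both sides.
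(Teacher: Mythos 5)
There is a genuine gap, and it lies not in your matrix construction but in the problem you reduce from. Simultaneous Incongruences restricted to pairwise coprime moduli (in particular, distinct primes) is not NP-hard; it is trivially solvable. By the Chinese Remainder Theorem, when the $a_i$ are pairwise coprime the residues $x \bmod \prod_i a_i$ correspond bijectively to tuples $(x \bmod a_1, \ldots, x \bmod a_n)$, so the number of $x \in \{0,\ldots,m-1\}$ satisfying $x \not\equiv b_i \pmod{a_i}$ for all $i$ is exactly $\prod_i (a_i-1) \ge 1$ whenever every $a_i \ge 2$: every such instance is a YES-instance. Relatedly, your claim that the standard 3-SAT-to-SIP reduction produces pairwise coprime moduli is incorrect; the Stockmeyer--Meyer-style reduction necessarily uses many incongruences sharing the same prime modulus (to force $x \equiv 0$ or $1 \bmod p_j$) and moduli that are products of several primes (one per clause), and this sharing of factors is exactly what makes SIP hard. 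So as written, your reduction maps instances of a polynomial-time (indeed trivial) problem, and no hardness conclusion follows.

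The good news is that the gadget itself is sound and fixable: the identity $\inner{\pmb c}{M^t\pmb a} = r^t\,\abs{\{i : t \not\equiv b_i \pmod{a_i}\}}$, the choice $r = 1 - \tfrac{1}{2n(m-1)}$ with the Bernoulli bound $r^{m-1} \ge 1 - \tfrac1{2n}$, and the threshold $\alpha = n - \tfrac34$ all work verbatim if you instead reduce from \emph{unrestricted} SIP and set $m \sett \mathrm{lcm}(a_1,\ldots,a_n)$, which has polynomially many bits even though its value may be exponential (all you use about $m$ is its bit-length, via the rational $r$, and the Bernoulli estimate, which is size-independent). Periodicity of the joint residue pattern with period $m$ needs only that $m$ is a multiple of the lcm, not coprimality, so soundness in the NO case is preserved. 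With that repair your argument becomes a correct and genuinely different proof from the paper's, which instead reduces from the problem of deciding whether a digraph has some length $t^*$ admitting no walk from node $1$ to node $n$, by scaling the adjacency matrix to force $\rho(M)<1$; your route has the merit of being a clean many-one reduction to the stated ``$>\alpha$'' question with an explicit quantitative gap, whereas the paper's reduction is structurally simpler but hinges on the cited hardness of the directed-walk problem and on the complemented inequality.
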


\begin{proof}{\informsproof}
The following decision problem is known to be NP-hard from~\cite{dir}: ({\sf{DIRCYC}}) Given a directed graph $G$ on $n$ nodes, is there an integer $t^{*} \in \N$ such that $G$ has no directed path of length $t^{*}$ from node $1$ to node $n$? 

We will show a reduction from this problem to the problem of our interest.
Let $n\in \N$ and graph $G$ be an input to {\sf{DIRCYC}}. We describe an input to our problem as follows. Let $A$ be the adjacency matrix of $G$. It is well known that $p_i^{(j)} \sett \bm e_1^\top A^j\bm e_i=(A^j)_{1i}$ is the number of directed walks of length $j$ from $1$ to $i$ in $G$. Let $r = 1 + \max\limits_j\sum\limits_iA_{ij}$. Proposition~\ref{prop:normbound} (in appendix) for $p=1$ implies $\rho(A) \le r-1$. So $\frac1rA$ has all eigenvalues of size $<1$.
Take inputs as $n, M = \frac1rA, \bm x = \bm e_n, \alpha = 0, \bm c=-\bm e_1$.  
Note that for any $t\in\N, \inner{\bm c}{A^t\bm x} = p_n^{(t)}$ for the aforementioned inputs.

We claim $\exists ~t^{*}\in \N$ for which $G$ has no directed path from $1$ to $n$ of length $t^{*}$ iff $\exists~t^*\in\N$ such that $\inner{\bm c}{M^{t^*}\bm a} \le \alpha$. Indeed, $\exists ~t^*\in \N$ such that $\inner{\bm c}{M^{t^*}\bm a} \ge \alpha \iff -\frac{p_n^{(t^*)}}{r^{t^*}} \ge 0 \iff p_n^{(t^*)} \le 0 \iff p_n^{(t^*)} = 0 \iff G$ has no directed path from $1$ to $n$ of length $t^{*}$.
\informsendofproof \end{proof}

Given the above, unsurprisingly, the lemma below shows that there exists instances for which the~\ref{eq:re} problem for infinite support is not solvable in polytime following our approach. For special choices of inputs $\bm c, \bm x, M$, the objective $\inner{\bm c}{M^t\bm x}$ can be controlled so that the \emph{first time} $t^*$ that the objective becomes positive is arbitrarily large. However, we also show in \iftoggle{isORtext}{the supplementary material}{Appendix~\ref{sec:specialcase}} that a constant upper bound for $t^*$ is possible for the special case of $n=2$.
\begin{lemma}\label{lem:blow}
For every $k=1,2,\cdots$, there are inputs $\bm c_k,\bm x_k\in \R^n, M_k \in \R^{n\times n}$ with $\rho(M_k) < 1$ such that $\inf\{t\!\in\!\N~|~\inner{\bm c_k}{M_k^t\bm x_k} \!>\! 0 \} \ge k$. In particular, the infimum exists (for these special inputs).
\end{lemma}

\subsection{Infinite Support: Distributional Robust Cost Estimation}
The NP hardness result for \ref{eq:re} already shows that the~\ref{eq:dre} problem is hard, as~\ref{eq:re} is a special case of~\ref{eq:dre}. In fact, note that representing the distribution $\hat{\bm p}$ over infinite support is possible on a computer only as a parametric distribution. Here, noting that a geometric distribution with parameter $\rho$ captures the first occurrence of an event, we show results for distributions represented by a geometric distribution with the success event indicating the stoppage of the GAS system. We rely on the fact that the Wasserstein-1 distance between two geometric distributions can be written in terms of the parameters~\cite{de20211}. Similar results as below can be obtained for other parametric distributions, such as Poisson or negative binomial, that admit a closed form formula for Wasserstein-1 distance in terms of the parameters.

Let $\hat{\rho}$ be the parameter for geometric distribution $\hat{\bm p}$, and we restrict the distributions in the ambiguity set of Equation~\ref{eq:ambiguity} to be from the geometric family, thus, we write $\bm q_\rho$ to indicate the parameter $\rho$ for any $\bm q \in \mathcal{P}$. By geometric distribution $q_\rho(t) = (1 - \rho)^{t-1}\rho$. Then, from standard results~\cite{de20211}, we get $
W_1(\bm q_\rho, \hat{\bm p}) = \vert 1/\rho - 1/\hat{\rho} \vert  
$, which further implies the restriction that $\frac{\hat{\rho}}{1 + \hat{\rho \xi}} \leq \rho \leq \frac{\hat{\rho}}{1 - \hat{\rho \xi}}$. Then, using the fact that $g(t)$ converges to $\bm 0$, we can choose a $n_0$ and optimize for $
\sum_{t = 1}^{n_0} g(t) q_\rho(t)
$ to obtain the following arbitrary additive approximation.
\begin{proposition} \label{prop:parametric}
Let $\ds n_0 =  \big\lceil\log_{\zeta}{\frac{\epsilon}{\sum\abs{d_i}+\sum \abs {w_j}}}\big\rceil$, where $\zeta = \max\set{\abs{\lambda_p},r_q}$, for small $\epsilon$. Let $\delta = (1 - \rho)^{n_0}$, then $\ds \abs{ 
\sum_{t = 1}^{\infty} g(t) q_\rho(t)
-
\sum_{t = 1}^{n_0} g(t) q_\rho(t) } \leq \epsilon \delta
$.
\end{proposition}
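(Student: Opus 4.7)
The plan is to bound the tail $\sum_{t=n_0+1}^{\infty} g(t) q_\rho(t)$ by separately controlling (i) the pointwise magnitude of $g(t)$ beyond the cutoff $n_0$ and (ii) the geometric tail probability $\sum_{t>n_0} q_\rho(t)$, then multiplying. This naturally gives the product $\epsilon \cdot \delta$ appearing in the statement.

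First, I would establish a simple uniform envelope on $|g(t)|$. Using the explicit form $g(t) = \sum_{i=1}^{q} d_i r_i^t \cos(t\theta_i + \eta_i) + \sum_{j=1}^{p} w_j \lambda_j^t$ together with $|\cos(\cdot)| \leq 1$, the triangle inequality gives
\[
|g(t)| \;\leq\; \sum_{i=1}^{q} |d_i|\, r_i^t + \sum_{j=1}^{p} |w_j|\, |\lambda_j|^t \;\leq\; \Bigl(\sum_i |d_i| + \sum_j |w_j|\Bigr) \zeta^t,
\]
since every $r_i \leq r_q \leq \zeta$ and every $|\lambda_j| \leq |\lambda_p| \leq \zeta$. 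Let $B \sett \sum_i |d_i| + \sum_j |w_j|$ for brevity.

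Next, I would invoke the definition of $n_0$. Because $\zeta < 1$ (the system is GAS), $\log_\zeta$ is a decreasing function, so $n_0 \geq \log_\zeta(\epsilon/B)$ implies $\zeta^{n_0} \leq \epsilon/B$. Combined with the envelope above and the monotonicity $\zeta^t \leq \zeta^{n_0}$ for $t \geq n_0$, this yields the key pointwise bound $|g(t)| \leq B \zeta^t \leq B \zeta^{n_0} \leq \epsilon$ for every $t \geq n_0$, and in particular for every $t \geq n_0+1$.

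Finally, I would bound the tail sum via the triangle inequality and compute the geometric tail in closed form:
\[
\Bigl|\sum_{t=1}^{\infty} g(t) q_\rho(t) - \sum_{t=1}^{n_0} g(t) q_\rho(t)\Bigr| \;=\; \Bigl|\sum_{t=n_0+1}^{\infty} g(t) q_\rho(t)\Bigr| \;\leq\; \epsilon \sum_{t=n_0+1}^{\infty} q_\rho(t).
\]
Using $q_\rho(t) = (1-\rho)^{t-1}\rho$ and summing the geometric series, $\sum_{t=n_0+1}^{\infty} (1-\rho)^{t-1}\rho = \rho(1-\rho)^{n_0}\sum_{k=0}^{\infty}(1-\rho)^k = (1-\rho)^{n_0} = \delta$, giving the required bound $\epsilon \delta$. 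There is no genuine obstacle here; the only subtlety worth flagging is the sign convention on $\log_\zeta$ (since $\zeta \in (0,1)$ flips the inequality), and the implicit assumption that $\epsilon \leq B$ so that $n_0 \geq 1$ — both are standard for this kind of convergence-rate argument and are covered by the phrase ``for small $\epsilon$'' in the statement.
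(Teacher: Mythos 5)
Your proof is correct and follows essentially the same route as the paper: the paper obtains the pointwise bound $|g(t)|\le\epsilon$ for $t>n_0$ by citing Proposition~\ref{prop:n0} (with $g(t_0)$ replaced by $\epsilon$), which is exactly the envelope $|g(t)|\le(\sum|d_i|+\sum|w_j|)\zeta^t$ you re-derive inline, and then bounds the tail by $\epsilon\sum_{t>n_0}q_\rho(t)=\epsilon(1-\rho)^{n_0}=\epsilon\delta$ just as you do via the geometric series.
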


\begin{proof}{\informsproof}
We obtain $n_0$ directly from application of from Proposition~\ref{prop:n0} by replacing $g(t_0)$ by $\epsilon$. Then, the CDF for $\ds \bm q_\rho$ is $1 - (1 - \rho)^t$. Thus, $\ds \abs{\sum_{t = n_0 + 1}^{\infty} g(t) q_\rho(t)} \leq \epsilon \sum_{t = n_0 + 1}^{\infty}  q_\rho(t) = \epsilon \delta
$. 
\informsendofproof \end{proof}

We note that $\ds \sum_{t = 1}^{n_0} g(t) q_\rho(t) $ may be a non-convex function in $\rho$ and we can use projected gradient ascent with random restarts to aim to find the maximum. 

\section{Experiments}\label{sec:experiments}
While our work is mainly theoretical in nature, we provide some experiments supporting the theory. In the main paper, we focus on two experiments: (1) synthetic experiments showing how our approach scales with state space, (2) CSOC overtime alert processing with partial data from a real CSOC, and (3) disease spread modeling using numbers taken from~\cite{10.1093/oso/9780192896087.003.0022}. Additional experiments showing the computational advantage of converting a Markov Chain to a GAS system, as presented in Section~\ref{sec:relation}, are in 
\iftoggle{isORtext}{the supplementary material.}{Appendix~\ref{sec:appendixexperiments}}. 
All experiments were run on Apple M2 Chip with 8 CPUs over 2.4 GHz each and 8GB RAM.
\iftoggle{isOR}{}{\begin{table}[t]
\centering
\noindent
\begin{minipage}[t]{0.49\textwidth}
  \centering
  \captionof{table}{SIR Transitions} \label{tab:sir}
  \begin{tabular}{cccc}
    \toprule
    From & to S & to I & to R \\
    \midrule
    S & 0.2 & 0.8 & 0\\
    I & 0 & 0.5 & 0.5 \\
    R & 0.1 & 0 & 0.9 \\
    \bottomrule
  \end{tabular}
\end{minipage}%
\hfill
\begin{minipage}[t]{0.49\textwidth}
  \centering
  \captionof{table}{SVIR Transitions} \label{tab:svir}
  \begin{tabular}{ccccc}
    \toprule
    From & to S & to V & to I & to R \\
    \midrule
    S & 0.1 & 0.1 & 0.8 & 0 \\
    V & 0.1 & 0.9 & 0 & 0 \\
    I & 0 & 0 & 0.5 & 0.5 \\
    R & 0.1 & 0 & 0 & 0.9 \\
    \bottomrule
  \end{tabular}
\end{minipage}
\hfill

%\vspace{-0.2em} % Optional spacing
%\caption{Two tables side-by-side at the top of the page}
\end{table}}

\iftoggle{isOR}{}{\begin{figure}
%\vspace{-0.5em}
\begin{center}
 \includegraphics[scale=0.75]{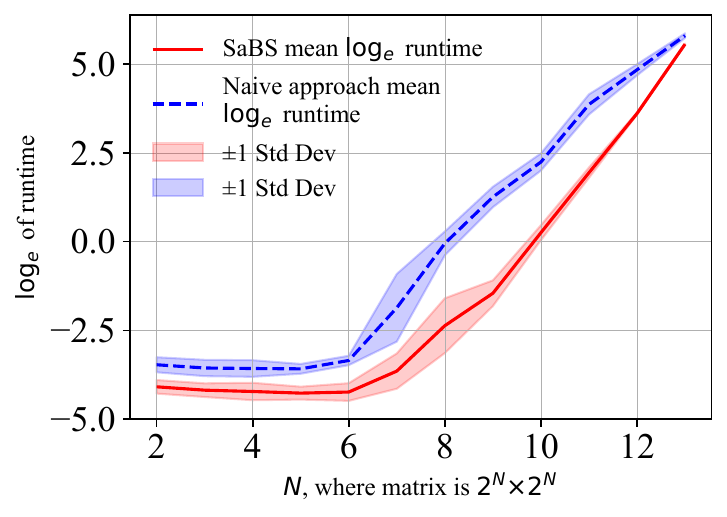}   
\end{center}
%\vspace{-0.5em}
\caption{Runtime over 30 random instance with varying size.} \label{fig:runtime}
%\vspace{-0.5em}
\end{figure}}
Our synthetic experiments are for the~\ref{eq:re} problem with an uncertain time horizon $\mathcal{T} = [T] = [10000]$. The results are based on runs of the SaBS algorithm and a baseline naive approach (Section~\ref{sec:finitesupportrce}) on 30 randomly generated input matrices $M$ of a given size $2^N \times 2^N$. We increase the size of $N$, with a cutoff of the model size that runs within an hour. Figure~\ref{fig:runtime} shows the runtime (in log scale of seconds) against $N$.  The results align with our claim that our approach provides scalability for $T$ larger than the matrix size $2^N$. The results also show that SaBS can handle a large state space and a large $T$ on a regular laptop much better than the naive approach. 

Both our semi-synthetic data based experiments compare the performance when estimating costs with and without considering distributional uncertainty, given $k$ samples $t_1, \ldots ,t_k$ of the number of timesteps from past observations or domain experts. Without distributional uncertainty, we estimate $\hat{t}$ as the average of the time samples and just compute \emph{empirical expected cost} $\hat{C}$ as $\hat{C} = M^{\hat{t}} x_0$. With distributional uncertainty, we form an empirical distribution $\hat{\bm p}$ from the $k$ time samples and solve for~\ref{eq:dre} with a Wasserstein ball of size $\xi$ around the empirical distribution to get the~\ref{eq:dre} solution as \emph{expected cost} $C$. Also, we run the chain (sample initial state and then sample next state every timestep) $k$ times, once for each of the $k$ time sample and stop at the timestep $t_k$ for the $k$-th sample. At the end of each run, we record the cost as given by the final state. Thus, we obtain $k$ possible values of costs. We then estimate how likely is it that the true costs will exceed $\hat{C}$ and 
$C$, by calculating the percentage of the $k$ costs that lie above these costs, indicated as $\% > \hat{C}$ and $\% > C$ respectively, in our results tables.

\iftoggle{isOR}{}{\begin{table}[t]
\centering
\begin{minipage}[t]{0.44\textwidth}
  \centering
  \captionof{table}{Solution Characteristics for Health Problem} \label{tab:sol}
  \begin{tabular}{ccc}
    \toprule
    Characteristic & SIR & SVIR \\
    \midrule
    Emp. Cost $\hat{C}$ & 0.75 & 0.69 \\
    \%$ > \hat{C}$ & 56 & 54   \\
    \sf{DRCE} Cost $C$ & 1.96 & 1.03  \\
    \%$ > C$ & 23 & 15  \\
    \bottomrule
  \end{tabular}
\end{minipage}
\begin{minipage}[t]{0.52\textwidth}
\centering
  \captionof{table}{Solution Characteristics for Overtime CSOC problem} \label{tab:csoc}
  \begin{tabular}{ccc}
    \toprule
    Characteristic & $\xi = 16$ & $\xi = 32$ \\
    \midrule
    Emp. Cost $\hat{C}$ & 0.73 & 0.73 \\
    \%$ > \hat{C}$ & 41  & 41 \\
    \sf{DRCE} Cost $C$  & 1.33  & 1.9 \\
    \%$ > C$ & 12 & 2  \\
    \bottomrule
  \end{tabular}
\end{minipage}
\end{table}}

%Our first experiment is based on CSOC data from a real-world CSOC
Our first experiment utilizes data collected from a real-world operational CSOC (anonymized due to double blind). The alert-handling process begins with the primary inspection of alerts generated by intrusion detection systems, during which analysts determine whether each alert is benign or malicious, an assessment that directly influences the organization’s overall security posture.
On average, two analysts perform primary inspection for cyber alerts coming in at the average Poisson rate of $35$ alerts per hour per person, with an average service rate of $34$ alerts per hour per person. The service rate follows a uniform distribution over range $34 \pm 3.4$ that allows 10\% deviation around the mean of $34$. Considering a timestep as $30$ seconds and possible alert queue sizes $\{0,1,\ldots, U=100\}$, we can form a transition matrix $M'$ for the discrete time Markov chain defined by each analyst's queue, where the queue for each analyst is separate. For the chain $M'$ for an analyst, the shift begins in a state $(0)$, indicating that there are $0$ alerts in the queue at the start of the shift. Then, there is an end of shift (960 timesteps) distribution over the number of alerts pending for an analyst, which we denote as $ \bm x_0$ as it serves as the initial state for our main object of study, the \emph{overtime Markov chain}. This overtime Markov chain, given by $M$, starts after the regular shift and is same as one in the regular time chain but with an arrival rate of $0$, since new alerts go to next shift employees. The chain $M$ is not ergodic due to state $(0)$ being a sink state, but it is straightforward to verify that it has one eigenvalue equal to $1$ and rest with magnitude less than $1$, resulting in the unique stationary distribution that puts all probability mass on $(0)$ at infinity. In our consideration, the end of overtime, or the time horizon of the overtime Markov chain, happens at some finite number of timesteps. The cost at the end of overtime is dependent on the number of alerts left in the queue, namely 0 cost for 0 alerts, 0.5 cost for 1 alert, and then linearly scaled to the highest cost of 1 for the maximum number of $U$ alerts. The total cost is the sum of the costs for each analyst. We assume that the number of timesteps in overtime (or the duration of overtime per person) could vary from 1 (30 sec) to 120 (1 hour) with a mean of 61 timesteps. With $k=100$ time samples we get $\hat{t} = 61$. The results are shown in Table~\ref{tab:csoc}, showing that worst case costs $C$ can be much higher than $\hat{C}$, which is computed assuming $\hat{t}$ overtime timesteps. As clearing the remaining queue would cost in terms of additional resource and such budgeting decisions need to be taken a priori, it is important in security settings to take into account worst case scenarios before committing to extra resources in reserve.

Our second experiment is based on transition matrix numbers from~\cite{10.1093/oso/9780192896087.003.0022} modeling a SIR disease spread within five people. We add a vaccinated state to model SVIR disease spread and compare the vaccinated vs non-vaccinated scenario. The transitions are shown in Table~\ref{tab:sir} and ~\ref{tab:svir}. The full Markov chain states describe the joint state of all individuals (e.g., SSSSS means all five are Susceptible). We associate a cost of 1 with Infected, and 0 otherwise, e.g., state SIISR has a cost of 2. We assume that everyone is Susceptible initially for the SIR model and considering practicalities such as vaccine hesitancy, 40\% of the population is Susceptible and 60\% Vaccinated initially for the SVIR model. We are interested in understanding the cost after 8 days, which nominally suggests that we should take 8 timesteps in the Markov chain. However, based on our motivation that a timestep a random variable over time instead of fixed 24 hours, we assume that the number of timesteps could vary from 1 to 15 with mean as 8. With $k=100$ time samples we get $\hat{t} = 8$.
%We generate $100$ samples of number of steps times from a binomial distribution with mean as 8 and range from 1-15. We form an empirical distribution $\hat{\bm p}$ from these samples and solve for~\ref{eq:dre} with a Wasserstein ball of size $\xi = 2$ around the empirical distribution to get the~\ref{eq:dre} \emph{expected cost} $C$. Also, we solve the problem classically, by just assuming a running time of 8 timesteps and obtain an \emph{empirical expected cost} $\hat{C}$. These numbers are shown in Table~\ref{tab:sol}. Also, we ran the chain (sample states and run) 100 times, each for a number of the timesteps as present in our sampled timesteps. At the end of this run, we record the cost as given by the final state. Thus, we obtain 100 possible values of costs. We then estimate how likely is it that the true costs will exceed $C$ and 
%$\hat{C}$, by calculating the percentage of the 100 costs from the 100 runs that lie above these costs. 
The numbers for this experiment are in Table~\ref{tab:sol}. As expected,~\ref{eq:dre} solution cost is more conservative and the chance of actual cost exceeding $C$ is smaller than that for 
$\hat{C}$; this allows for more robust decision making. Also, the~\ref{eq:dre} 
 solution shows a bigger difference in cost $C$ between the SIR and SVIR cases, which reveals that~\ref{eq:dre}
 can bring out the difference between vaccinating or not more starkly and with higher confidence.

%and the corresponding empirical mean $\hat \mu$; $\hat \mu$ gives a cost $\kappa = \inner{\bm c}{M^{\left\lfloor\hat\mu\right\rfloor}\bm x_0}$. We then compute sample probability $P_{\hat{\bm p}}(Cost > \kappa)$ and real probability $P_{\bm p}(Cost > \kappa)$. 

%Then we compute the empirical distribution $\hat{\bm p}=$ \texttt{p\_hat} and the corresponding empirical mean $\hat \mu$;  $\hat \mu$ gives a corresponding cost $\kappa = \inner{\bm c}{M^{\left\lfloor\hat\mu\right\rfloor}\bm x_0}$. With this we determine $\P{\inner{\bm c}{M^t\bm x_0} > \kappa}$ both with respect to the real (binomial) distribution and the empirical distribution. The real probability is $\sum_{t\in \cT}{20\choose t-10}2^{-20}\cdot \bm 1[\inner{\bm c}{M^t\bm x_0} > \kappa]$ and the empirical version is $\sum_{t\in \cT}\hat{\bm p}_t\cdot \bm 1[\inner{\bm c}{M^t\bm x_0} > \kappa]$ where $\bm1[A]$ is the indicator for event $A$. These values have been reported and the actual vs sample probabilities are quite close. 

\section{Conclusion}\label{sec:conclusion}
We proposed a pertinent distributional robust cost estimation problem in GAS systems with an uncertain time horizon, and presented theoretical results to solve it that also yielded fundamental general theory results. The robustness is particularly relevant in resource allocation or policy decision making in critical settings such as cybersecurity operations or health care security. A number of promising research directions can be pursued, such as exploring different ambiguity sets, time horizon robustness in control problems, and combining with uncertainty in other problem parameters, enabling resilient, data-driven strategies that enhance security, operational reliability, and the protection of critical infrastructure and population-level resources.

%\newpage

%\section*{Impact Statement}
%This paper presents work whose goal is to advance the field of Machine Learning. There are many potential societal consequences of our work, none of which we feel must be specifically highlighted here.

\bibliographystyle{plain}
\bibliography{references}
\appendix

%%%%%%%%%%%%%%%%%%%%%%%%%%%%%%%%%%%%%%%%%%%%%%%%%%%%%%%%%%%%%%%%%%%%%%%%%%%%%%%

%%%%%%%%%%%%%%%%%%%%%%%%%%%%%%%%%%%%%%%%%%%%%%%%%%%%%%%%%%%%%%%%%%%%%%%%%%%%%%%

\section{Missing and Full Proofs} \label{sec:proofs}

%\subsection{Proof of Theorem~\ref{thm:structure}}

\subsection{Proof of Corollary~\ref{cor:structure}}
\begin{proof}{\informsproof}
Observe that $\bm x_i - \bm \pi \in H_{n-1}$ (i.e., the vector components sum to 0). Thus, by second property of the theorem, $\overline M \bm v_i = \overline M A (\bm x_i - \bm \pi) = AM(\bm x_i - \bm \pi) = A(\bm x_{i+1} - \bm \pi) = \bm v_{i+1}$. Next, $B \bm v_i = BA(\bm x_i - \bm \pi) = \bm x_i - \bm \pi$ because $BA$ is identity on $H_n$.
\informsendofproof \end{proof}

\subsection{Proof of Proposition~\ref{prop:GAS}}
\begin{proof}{\informsproof}
First, we show that $\overline M$ is Lyapunov stable at $\bm 0$. Note if $\bm x\in\R^{n-1}$ then $\norm{\overline M^t\bm x}1 = \norm{AM^tB\bm x}1 \le \norm{A}{1}\norm{M}{1}^t\norm{B}1\norm{\bm x}{1} = \norm{A}{1}\norm{B}{1} \norm{\bm x}{1}$. For any given $\epsilon>0$ take $\delta\sett \dfrac{\epsilon}{\norm{A}{1}\norm{B}{1}}$. So if $\norm{\bm x}{1} < \delta$ then $\norm{\overline M^t\bm x}{1} < \norm A1\norm B1\delta = \epsilon$.
Then, $\norm{AM^tB\bm y}{1} \le \norm{A}{1}\norm{M^t\bm x-\bm \pi}{1}\stackrel{t\to\infty}{\longrightarrow} \bm 0$. Therefore $\overline M$ has all eigenvalues with magnitude $<1$. 
\informsendofproof \end{proof}

\subsection{Proof of Proposition~\ref{prop:initial}}
\begin{proof}{\informsproof}
The following directly follow from the definitions of $\bm x_t$ and $\mathcal{U}$.
\begin{align*}
\max_{\bm x_0 \in \mathcal{U}, \bm q \in \mathcal{P}} \;  \sum_{t \in \mathcal{T}}  q_t \inner { \bm c} {\bm x_t } 
& = \max_{\bm x_0 \in \mathcal{U}, \bm q \in \mathcal{P}} \; \sum_{t \in \mathcal{T}}  q_t  \inner { \bm c} {M^t {\bm x}_0} \\
& = \max_{\alpha_i, \bm q \in \mathcal{P}} \; \sum_{t \in \mathcal{T}}  q_t  \Big ( \inner { \bm c} {M^t \hat{\bm x}_0} + \sum_{i=1}^U \alpha_i \inner { \bm c} {M^t {\bm u}_i} \Big) 
\end{align*}
Next, let $\bm q^*$ and $\alpha^*_i$ be a solution of the optimization in the last line above. Then, observe that
\begin{align*}
\sum_{t \in \mathcal{T}}  & q^*_t  \Big ( \inner { \bm c} {M^t \hat{\bm x}_0} + \sum_{i=1}^U \alpha^*_i \inner { \bm c} {M^t {\bm u}_i} \Big)
 = \sum_{t \in \mathcal{T}}  q^*_t \inner { \bm c} {M^t \hat{\bm x}_0} + \sum_{i=1}^U \alpha^*_i \sum_{t \in \mathcal{T}}  q^*_t  \inner { \bm c} {M^t {\bm u}_i}  
\end{align*}
Let $ J = \arg\!\max_{i} \sum_{t \in \mathcal{T}}  q^*_t  \inner { \bm c} {M^t {\bm u}_i} $. It is straigtforward to check that $\alpha^*_i = 0$ for $i \notin J$. In fact, any $\alpha$ values with $\alpha_i = 1$ for any $i \in J$ is a feasible solution that yields the same optimal value. Thus, there exists an optimal solution that has $\alpha_i = 1$ and vertex $u_i$ selected. This leads to the simple algortihm enumerated below 
\begin{enumerate}
\item Iterate over all vertices $u_i$ and solve $\max_{\bm q \in \mathcal{P}} \; \sum_{t \in \mathcal{T}}  q_t  \inner { \bm c} {M^t (\hat{\bm x}_0 + u_i)}$.
\item Choose the best solution value from the above loop.
\end{enumerate}
This algorithm is exactly same as $\max_{i \in [U]} \max_{\bm q \in \mathcal{P}}  \sum_{t \in \mathcal{T}} q_t \langle \bm c, M^t (\hat{\bm x}_0 + \bm u_i )\rangle$.
\informsendofproof \end{proof}

%\subsection{Proof of Lemma~\ref{lem:robustruntime}}

\subsection{Proof of Proposition~\ref{prop:wasserstein-norm}}
\begin{proof}{\informsproof} Let's write $\norm{\cdot}{}$ for $\norm{\cdot}{W,d}$. Call $C \sett \set{\bm x\in\R^T\st \abs{x_i-x_j}\le d_{ij}\fa i,j}$. Note that $\bm x\in C\iff -\bm x\in C$.\\
(Homogeneity) If $\bm \mu\in H_T,\lambda\in\R$ then 
$\ds\norm{\lambda\bm\mu}{} = \max_{\bm x\in C}\lambda\bm \mu^\top \bm x = \abs\lambda \max_{\bm x\in C}\bm \mu^\top \bm x = \abs\lambda \norm{\bm \mu}{}$ because $\bm x$ can be replaced with $-\bm x$ depending on the sign of $\lambda$.\\
(Positive definiteness) Since $\bm x\in C\iff -\bm x\in C$, we see that $\norm{\bm \mu}{}\ge 0$ always. Suppose $\norm{\bm \mu}{} = 0$ for some $\bm\mu\in G_T$. Then $\bm \mu \le 0\fa \bm x\in C$ by definition. Let $\alpha_i \sett \min_{j\in[T]\smallsetminus\set{i}} d_{ij}$. Clearly each $\alpha_i>0$ and each $\bm \omega_i \sett \alpha_i\bm e_i\in C$. The latter is true because for $k\ne j, j\ne i$ $\abs{(\bm \omega_i)_k-(\bm\omega_i)_j} = \begin{cases}\alpha_i&\text{if } k=i\\
0&\text{otherwise}
\end{cases} \le d_{kj}$. It follows that $0 \ge \bm \mu^\top \bm \omega_i = \alpha_i\mu_i$. But $\alpha_i>0$ whence $\mu_i\le 0\fa i$. The only $\bm \mu\in H_T$ with all non-positive coordinates is $\bm \mu=\bm 0$.\\
(Triangle inequality) Take any $\bm \mu,\bm \nu\in H_T$. Then for each $\bm x\in C$ we have $\ds (\bm \mu + \bm \nu)^\top \bm x = \bm \mu^\top\bm x + \bm\nu^\top\bm x \le \max_{\bm y\in C}\bm\mu^\top\bm y + \max_{\bm z\in C}\bm\nu^\top\bm z = \norm{\bm \mu}{} + \norm{\bm \nu}{}$. It thus stands that $(\bm \mu + \bm \nu)^\top \bm x \le \norm{\bm \mu}{} + \norm{\bm\nu}{}\fa \bm x\in C$. Taking $\max$ over $\bm x\in C$ gives $\norm{\bm\mu + \bm \nu}{} \le \norm{\bm \mu}{} + \norm{\bm \nu}{}$.
\informsendofproof \end{proof}

\subsection{Proof of Proposition~\ref{prop:tightness}}
\begin{proof}{\informsproof}
From definition of $g(t)$ and triangle inequality, we get 
\begin{align*}
\ds\abs{\frac{g(t)}{\abs{\lambda_p}^t}-w_p\left(\sgn \lambda_p\right)^t} & = \abs{\sum_{i=1}^q \frac{d_ir_i^t}{\abs{\lambda_p}^t} \cos(t\theta_i+\eta_i) + \sum_{j=1}^{p-1}\frac{w_j\lambda_j^t}{\abs{\lambda_p}^t} } \\
& \le \sum_{i=1}^q \abs{d_i}\abs{\frac{r_i^t}{\lambda_p^t}} + \sum_{j=1}^{p-1}\abs{w_j}\abs{\frac{\lambda_j^t}{\lambda_p^t}} 
 \le \left(\sum_{i=1}^q \abs{d_i} + \sum_{j=1}^{p-1} \abs{w_j}\right)\epsilon^t = \beta\epsilon^t
\end{align*}
\informsendofproof \end{proof}

%\subsection{Proof of Lemma~\ref{lem:cutoff}}

\subsection{Proof of Proposition~\ref{prop:tight2}}
\begin{proof}{\informsproof}
From definition of $g(t)$ and triangle inequality, we get 
\begin{align*}\ds\abs{\frac{g(t)}{r_q^t}-d_q\cos(t\theta_q+\eta_q)} & = \abs{\sum_{i=1}^{q-1} \frac{d_ir_i^t}{r_q^t} \cos(t\theta_i+\eta_i) + \sum_{j=1}^{p}\frac{w_j\lambda_j^t}{r_q^t} } \\
& \le \sum_{i=1}^{q-1} \abs{d_i}\abs{\frac{r_i^t}{r_q^t}} + \sum_{j=1}^{p}\abs{w_j}\abs{\frac{\lambda_j^t}{r_q^t}}  \le \left(\sum_{i=1}^{q-1} \abs{d_i} + \sum_{j=1}^{p} \abs{w_j}\right)C^t = \gamma C^t
\end{align*}
\informsendofproof \end{proof}

\subsection{Proof of Lemma~\ref{lem:cutoffcomplex}}
\begin{proof}{\informsproof}
We first want to show that $\exists k \in\Z_{>0}$ s.t. $\cos(k \theta_q+\eta_q) \ge 0.5$ and $k'\in\Z_{>0} $ $\cos(k' \theta_q+\eta_q) \le -0.5$, which can be found in $\cO(\log(\min(a,b)))$.

For ease of notation, write $\theta,\eta,d$ for $\theta_q,\eta_q,d_q$ respectively. Let's first do the calculation \emph{assuming $d>0$}. Let $\epsilon$ be such that $\cos^{-1} \epsilon = 50$. If we find some integers $k>0,m$ such that $k \theta + \eta \in (360m -50, 360m + 50)$, this implies that $\cos (k \theta + \eta ) > \epsilon > 0.5$. Next,
$$
k \theta + \eta \in (360m -50, 360m + 50) \implies k \theta  \in (360m -50 - \eta, 360m + 50 - \eta)
$$
Then, we need to find $k>0,m$ such that
$
 k \theta  \in (360m -50 - \eta, 360m + 50 - \eta) 
$.

Consider $i = \lfloor  \eta \rfloor$. It is enough to find $k$ such that (note inclusive below)
$$
k \theta  \in [360m -45 - i, 360m + 45 - i]
$$
The above is equivalent to $k\theta+45+i-360m\in[0,90]$.

Let us use a shortform $c = 45 + i$.
Let $\theta=a/b$ and $0 \leq z \leq 90$. Choose $z = p/b$, for any $0< p < 90b$. 
%such that $\gcd(a,360)\divides p-cb$. {\color{red}{why does such $p$ exist?}} 
Then, we want to show that $k \theta + c = 360m + z$ for some positive integer $k$ and an integer $m$. 

From now on we treat $k,m$ as integer variables which we solve for. Substituting the fractions, we need to solve $ k\cdot a - m\cdot 360b = p-cb$. Since $\gcd(a,b)=1$ so $\gcd(a,360b)=\gcd(a,360)=g$. Then, $a/g$ and $360b/g$ are relatively prime, and $g\divides p-cb$ (read $\divides $ as divides).
By Bezout's identity, there are integers $n,l$ such that $n\cdot a/g + l\cdot360b/g=1$. We claim that $n\ne 0$ because if it were, then $l=360b/g=1$ so that $g\divides 360$ (as $\gcd(g,b) \le \gcd(a,b)=1$), which would mean that $g=360$ further implying $a\ge 360$ and $b=1$ making $\theta\ge 360$ but we had assumed $\theta<360$.

Say $n>0:$ Choose integer $s\ge 0$ such that $\alpha_s\sett(p-cb)/g+s\cdot360b/g>0$. Then $n\alpha_s\cdot a/g + l\alpha_s\cdot360b/g=\alpha_s$. \par
Say $n<0:$ Choose integer $s\le 0$ such that $\alpha_s \sett (p-cb)/g+s\cdot360b/g<0$. Then $n\alpha_s\cdot a/g + l\alpha_s \cdot360b/g = \alpha_s$.

Combined, we can say that with $\ds s_0 \sett 1+\max\set{0,\left\lceil\frac{\sgn(n)(cb-p)}{360}\right\rceil}$ and $f\ge 0$ we have that $\alpha=\alpha_{\sgn(n)(s_0+f)}$ satisfies $n\alpha\cdot a/g + l\alpha\cdot360b/g=\alpha$ with $n\alpha>0$. Thus we have found infinitely many solutions for $(k,m)$, namely $\set{\left(n\alpha_{\sgn(n)(s_0+f)}, s_0+\sgn(n)t - l\alpha_{\sgn(n)(s_0+f)}\right)\st f\in\Z, f\ge 0}$. The important point is that each aforementioned solution for $k$, that is $n\alpha_{\sgn(n)(s_0+f)}$, is positive and is a strictly increasing sequence. In fact, the expression equals $\ds n\alpha_{\sgn(n)(s_0+f)} = n(p-cb)/g + \abs{n}(s_0+f)360b/g$ for $f\ge 0$ and is always positive. Define $k_f\sett n(p-cb)/g + \abs{n}(s_0+f)360b/g$. They satisfy that $\cos(k_f\theta+\eta) \ge 0.5$ whence by Proposition~\ref{prop:tight2}, we conclude that for any positive integer $f \ge \frac{g\log_C\left(\frac{d}{2\gamma}\right) - n(p-cb)}{360b\abs n}-s_0$, we have $g(k_f) > 0$ because such a choice of $f$ makes $\gamma C^{k_f} < \frac{d}{2}$.

\emph{If $d<0$}, we seek to solve the same problem but with $\cos(k\theta+\eta)<-0.5$ and this is equivalent to $\cos(k\theta+180+\eta) > 0.5$. That is, we can repeat the above steps by replacing $\eta$ with $\eta+180$. So we want to find $t$ for which $\gamma C^t < \frac{-d}{2}$ and this is achieved with $k_f = n(p-c'b)/g+\abs{n}(s_0'+f)360b/g$ where $c'=45+i+180$ and $s_0'$ is the same expression as $s_0$ above, but $c$ replaced with $c'$.

Putting these together, our $t_0$ takes the form
$$\frac{n(p-bc)+\abs{n}\left(s_0+ \max\set{1,\left\lceil \frac{g\log_C\left(\frac{\abs d}{2\gamma}\right) - n(p-cb)}{360b\abs n}-s_0\right\rceil}\right)360b}{g}$$ where $s_0 = 1+\max\set{0,\left\lceil\frac{\sgn(n)(cb-p)}{360}\right\rceil}$ and $c = 135+i-\sgn(d)\cdot 90$.

In the above solving for the $n,l$ using Bezout's identity can be done by Extended Euclid's algorithm, which takes time of the order of the smaller of bit representation of $a/g$ and $360b/g$. With a worst case of $g=1$, this is $\cO (\log (\min (a, b) )$, which is the dominating computation. \informsendofproof \end{proof}

%\subsection{Proof of Theorem~\ref{thm:nphard}}

\subsection{Proof of Lemma~\ref{lem:blow}}
\begin{proof}{\informsproof}
We start with an $n=2$ proof and will extend it to general $n$ later. Consider $\alpha_k,\theta_k \in \Q$ and inputs $\bm c_k,\bm x_k,M_k$ as follows: $\bm c_k=(1,0), \bm x_k=(\cos\alpha_k,\sin\alpha_k), M_k=\frac12\begin{bmatrix}\cos\theta_k&-\sin\theta_k\\\sin\theta_k&\cos\theta_k\end{bmatrix}$. The factor $\frac12$ was used because $2M_k$ has all distinct eigenvalues of size $1$, whence $M_k$ has all eigenvalues of size $<1$. In other words, $\rho(M_k) < 1$. Thus, $2^{t}g(t) = 2^{t}\bm c_k^\top M_k^t\bm x_k =  \cos\alpha_k \cos t\theta_k- \sin\alpha_k\sin t\theta_k = \cos (\alpha_k + t\theta_k)$.

We show there exist $\alpha_k,\theta_k \in \Q$ such that $g(t)$ is negative for any $t\in [k]$.
It is well known from arctan series (Taylor expansion of $\tan^{-1}$) that $\ds \frac\pi4=1-\frac13+\frac15-\frac17+\cdots$. Take $x_i \sett \dfrac{1}{2i-1}$. Then, it is also known that $$\ds\abs{\frac{\pi}{4} - \sum_{i=1}^k(-1)^{i+1}x_i} \le x_{k+1} = \frac{1}{2k+1}.$$ 
So $\ds\abs{\frac{\pi}{2} - \sum_{i=1}^k(-1)^{i+1}2x_i} \le 2x_{k+1} = \frac{2}{2k+1}$. 
Consider the terms $$\ds s_i \sett 2x_{2i-1} - 2x_{2i} = \frac{2}{4i-3} - \frac{2}{4i-1} = \frac{4}{(4i-1)(4i-3)} > 0.$$ 
This gives 
$$\ds\abs{\frac{\pi}{2} - \sum_{i=1}^{k}s_i} = \abs{\frac{\pi}{2} - \sum_{i=1}^{2k}(-1)^{i+1}2x_i} \le 2x_{2k+1} = \frac{2}{4k+1}.$$ 
This proves that $\lim_{k\to\infty}\sum_{i=1}^k s_i = \pi/2$ and the convergence is monotone. So 
\begin{equation} \label{eq:upperb}
\frac{\pi}{2} - \sum_{i=1}^{k}s_i = \abs{\frac{\pi}{2} - \sum_{i=1}^{k}s_i} \le \frac{2}{4k+1}.\end{equation}

Let $\alpha_k\sett \sum_{i=1}^k s_i$ and $\ds\theta_k\sett 2/(4k+1)$. Since $\alpha_k\uparrow \pi/2$, we have $\cos\alpha_k>0, \sin\alpha_k>0$
\begin{itemize}
\item From Equation~\ref{eq:upperb}, $\pi/2 < \alpha_k +\theta_k$.
\item If $t>1$ is a positive integer, then $t\theta_k +\alpha_k > \theta_k+\alpha_k$ because $\theta_k>0$.
\item Finally, $k \theta_k = \ds\frac{2k}{4k+1} < \frac{1}{2}<\frac{\pi}{2}<\left(\frac{\pi}{2}-\alpha_k\right)+\frac\pi2 =\pi - \alpha_k \implies k\theta_k + \alpha_k < \pi.$
\end{itemize}
The above bullet points prove that $$\pi/2 < \alpha_k+\theta_k < \alpha_k+2\theta_k < \cdots < \alpha_k+k\theta_k < \pi.$$ We know that $\cos$ is strictly decreasing over $[0,\pi]$ and thus $$0 > \cos(\alpha_k+t\theta_k) = 2^{t}g(t) > -1 \mbox{ for all integers } t\in[k].$$ We have thus shown that the first positive integer $t$, if any, satisfying $\cos(\alpha_k+t\theta_k)>0$ must satisfy $t>k.$  In fact, $t=9k$ is such a candidate. Indeed, $$\frac{5\pi}{2} > \pi + 4 > \pi + \frac{16k}{4k+1} = \pi + 8k\theta_k > \alpha_k + 9k\theta_k$$ and 
$$\alpha+9k\theta_k > \frac\pi2-\theta_k+9k\theta_k = \frac{18k-2}{4k+1} + \frac{\pi}{2} \stackrel{[\because k\ge 1]}{\ge} \frac{16}{5} +\frac\pi2 > \frac{3\pi}2.$$
Since $\alpha_k+9k\theta_k$ is between $\frac{3\pi}{2},\frac{5\pi}{2}$, we must have $\cos(\alpha_k+9k\theta_k) > 0$. This proves our desired result for $n=2$.

Take $\tilde M_k = \text{diag}(M_k, 2^{-1}, 3^{-1}, \ldots, (n-1)^{-1}) \in\R^{n\times n}, \tilde{\bm x}_k = (\bm x,0,\cdots,0)\in\R^n, \tilde{\bm c}_k=\bm e_1\in\R^n$, then $\tilde M_k$ has all distinct eigenvalues of size $<1$ and $\tilde{\bm c}_k^\top \tilde M_k \tilde{\bm k}_k= \bm c_k^\top M_k\bm x_k$. We thus get the same behavior for $n\times n$ matrices.
\informsendofproof \end{proof}

%\subsection{Proof for Proposition~\ref{prop:parametric}}

\section{Miscellaneous Results and Clarifications} \label{sec:misc}

\begin{lemma}[Farkas lemma]\label{lem:farkas}
Let $B\in \R^{d\times e}, \bm b\in \R^d$. Then exactly one of the following sets is empty:
\begin{enumerate}
    \item $\set{\bm x\in \R^e\st B\bm x = \bm b, \bm x\ge 0}$.
    \item $\set{\bm y\in \R^d\st B^\top \bm y \le 0, \bm b^\top\bm y>0}$.
\end{enumerate}
\end{lemma}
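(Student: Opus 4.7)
The plan is to establish the dichotomy in two parts. For the easy direction — that (a) and (b) cannot both be nonempty — I would argue by one-line contradiction: if $\pmb x \geq 0$ satisfies $B\pmb x = \pmb b$ and simultaneously $\pmb y$ satisfies $B^\top \pmb y \leq 0$ with $\pmb b^\top \pmb y > 0$, then $0 < \pmb b^\top \pmb y = (B\pmb x)^\top \pmb y = \pmb x^\top (B^\top \pmb y) \leq 0$, since the last expression is an inner product of a coordinatewise nonnegative vector with a coordinatewise nonpositive one.

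For the nontrivial direction — (a) empty implies (b) nonempty — I would introduce the convex cone $C \sett \set{B\pmb x \st \pmb x\in\R^e,\pmb x\ge 0}$, which contains $\pmb 0$, and note that the hypothesis says $\pmb b \notin C$. The goal is to produce $\pmb y \in \R^d$ such that $\pmb y^\top \pmb c \leq 0$ for every $\pmb c \in C$ and $\pmb y^\top \pmb b > 0$. Specializing the first inequality to $\pmb c = B\pmb e_j$ for each standard basis vector $\pmb e_j$ of $\R^e$ translates it into $B^\top \pmb y \leq 0$ coordinatewise, which combined with $\pmb y^\top \pmb b > 0$ exhibits a point of (b). Such a $\pmb y$ is exactly what a strict separating hyperplane between $C$ and $\pmb b$ provides: the separation theorem gives $\pmb y$ and $\alpha$ with $\pmb y^\top \pmb b > \alpha \geq \pmb y^\top \pmb c$ for every $\pmb c \in C$; because $C$ is a cone, $t\pmb c \in C$ for all $t>0$, and boundedness of $\pmb y^\top (t\pmb c)$ as $t\to\infty$ forces $\pmb y^\top \pmb c \leq 0$; and since $\pmb 0 \in C$, we have $\alpha \geq 0$, hence $\pmb y^\top \pmb b > 0$.

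The main obstacle is justifying the separating hyperplane step, which requires $C$ to be closed — the standard strict separation theorem for a convex set and a point outside it needs closedness. This is the only nontrivial topological input, and it is the Minkowski--Weyl statement for finitely generated cones. I would prove it via Carathéodory's theorem for conic combinations: any element of $C$ can be written as a nonnegative combination of a linearly independent subset of the columns of $B$. Thus $C = \bigcup_S C_S$, where $S$ ranges over the finitely many subsets of column indices whose corresponding columns $B_S$ are linearly independent, and $C_S \sett \set{B_S \pmb z \st \pmb z \geq 0}$. Each $C_S$ is the image of the closed orthant $\R^{|S|}_{\geq 0}$ under the injective linear map $B_S$; since an injective linear map is a homeomorphism onto its image (a linear subspace of $\R^d$, hence closed), $C_S$ is closed in $\R^d$. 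A finite union of closed sets is closed, so $C$ is closed, which supplies the missing ingredient for separation and completes the proof.
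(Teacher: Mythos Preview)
The paper does not prove this lemma; it merely recalls it as ``the standard known Farkas lemma'' and invokes it as a black box inside the proof of Theorem~\ref{thm:polytope}. Your proposal is a correct and self-contained standard argument: the mutual-exclusion direction is the usual one-line computation, and for the existence direction you correctly reduce to strict separation of $\pmb b$ from the finitely generated cone $C=\{B\pmb x:\pmb x\ge 0\}$, then supply the only nontrivial ingredient --- closedness of $C$ --- via the conic Carath\'eodory decomposition $C=\bigcup_S C_S$ into finitely many simplicial cones, each closed because an injective linear map between finite-dimensional spaces is a homeomorphism onto its (closed) image. Nothing is missing.
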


\begin{proposition}
\label{prop:normbound}
$\rho(A)\le \norm Ap$ for any $p\in\R_{\ge 1}$ and $A\in\C^{n\times n}$.
\end{proposition}
\begin{proof}{\informsproof}
If $\lambda\in \C$ is an eigenvalue of $A$ such that $\rho(A) = \abs{\lambda}$ then $\exists\bm u\in\C^n$ such that $\norm{\bm u}p=1$ and $A\bm u=\lambda\bm u$ whence $\norm{A}{p} \ge \norm{A\bm u}{p} = \norm{\lambda \bm u}{p} = \abs{\lambda} = \rho(A)$.
\informsendofproof \end{proof}

\textbf{All different eignevalues assumption}: Note that for a GAS system the highest magnitude eigenvalue determines the rate of convergence to $\bm 0$. Thus, if the there are two eigenvalues exactly same with the highest magnitude, perturbing one of them so that magnitude reduces by $\epsilon$ for arbitrarily small $\epsilon$ does not affect the convergence to $\bm 0$ or the rate of convergence. The other non-highest magnitude eignevalues can also be perturbed similarly with no effect on the convergence to $\bm 0$ or the rate of convergence.

\textbf{How do $u_i,v_i,w_j$ only depend on $\bm c,\bm x, P$?}
Recall $M=PJP^{-1}$ where $J$ is the real Jordan form of $M$ and $P$ is a real invertible matrix. The expression $\inner{\bm c}{M^t\bm x} = \bm c^\top P J^t P^{-1}\bm x$. Consider $\bm \sigma \sett P^\top \bm c$, $\bm \tau\sett P^{-1}\bm x$. Then $\ds\inner{\bm c}{M^t\bm x} = \bm \sigma^\top J^t \bm \tau = \begin{bmatrix}
\sigma_1\\
\vdots\\
\sigma_{2q}\\
\sigma_{2q+1}\\
\vdots\\
\sigma_{2q+p}
\end{bmatrix}^\top
\begin{bmatrix}
r_1^tJ_1^t\\
&\ddots& & \text{\Large 0} \\
&&r_q^t J_q^t\\
&&&\lambda_1^t\\
&\text{\Large 0} &&&\ddots\\
&&&&&\lambda_p^t
\end{bmatrix}
\begin{bmatrix}
\tau_1\\
\vdots\\
\tau_{2q}\\
\tau_{2q+1}\\
\vdots\\
\tau_{2q+p}
\end{bmatrix} = \sum_{i=1}^q r_i^t \begin{bmatrix} \sigma_{2i-1}\\\sigma_{2i}\end{bmatrix}^\top J_i^t \begin{bmatrix} \tau_{2i-1} \\ \tau{2i} \end{bmatrix} + \sum_{j=1}^p \lambda_j^t \sigma_{2q+j}\tau_{2q+j}$.
Each summand in the first summation looks like $\ds r_i^t\begin{bmatrix} \sigma_{2i-1}\\\sigma_{2i}\end{bmatrix}^\top J_i^t \begin{bmatrix} \tau_{2i-1} \\ \tau{2i} \end{bmatrix} = r_i^t \left(\cos t\theta_i (\tau_{2i-1}\sigma_{2i-1} + \tau_{2i}\sigma_{2i}) - \sin t\theta_i (\tau_{2i}\sigma_{2i-1} - \tau_{2i-1}\sigma_{2i})\right)$. Defining $u_i\sett \tau_{2i-1}\sigma_{2i-1} + \tau_{2i}\sigma_{2i}, v_i\sett \tau_{2i}\sigma_{2i-1} - \tau_{2i-1}\sigma_{2i}, w_j \sett \tau_{2q+j}\sigma_{2q+j}$ for each $1\le i\le q, 1\le j\le p$ gives $\inner{\bm c}{M^t\bm x} = \sum\limits_{i=1}^qr_i^t(u_i\cos t\theta_i-v_i\sin t\theta_i) +\sum\limits_{j=1}^p w_j\lambda_j^t$. Moreover, the $u_i,v_i,w_j$'s are defined in terms of $\bm \sigma, \bm \tau$ which only depend on $P,\bm c,\bm \tau$ which all have only real entries.

\textbf{Detailed comparison to~\cite{blanchet2019quantifying}}:
The simplified dual formulation in Remark 1 of~\cite{blanchet2019quantifying}, when applied to our problem with the natural underlying distance between time as $d(t,t') = |t-t'|$ looks like  
$$\min_{\lambda \geq 0} \lambda \xi + \sum_{t' \mathcal{T}} {\bm p_{t'}} C(\lambda, t') \quad \mbox{ where } \quad C(\lambda, t') = \max_{t \in \mathcal{T}} \langle \bm c,M^t \bm x_0 \rangle - \lambda |t-t'|$$
Note that if $\xi$
 is larger than the maximum possible $|t-t'|$ 
 (which is when~\ref{eq:dre}
 becomes the 
 problem~\ref{eq:re}) then one can see that $\lambda$
 will be zero and the dual form objective above is 
$\sum_{t' \mathcal{T}} {\bm p_{t'}} C(0, t')$. Observe that $C(0, t')$
 does not depend on 
$t'$, thus, this dual becomes $\max_{t \in \mathcal{T}} \langle \bm c,M^t \bm x_0 \rangle$, which is exactly same as~\ref{eq:re}. Thus, this dual formulation does not afford any computational advantage over what we do.

\section{Special case for $n=2$} \label{sec:specialcase}

In this special case for the infinite support, we can provide a better bound on $t_{0}$.
Recall that if $M$ has purely imaginary eigenvalues with the angle parameter of its Jordan form given by $\theta$, $r = \det M>0$, $\inner{\pmb c}{M^t\pmb x} = r^t(a_1\cos t\theta-a_2\sin t\theta)$ where $a_1,a_2$ are completely determined by $\pmb c,\pmb x, M$. We let $d>0,\kappa>0,\alpha\in(0,2\pi), \gamma\in(0,2\pi)$ be such that $a_1+ia_2 = de^{i\alpha}$ and $\ln r+i\theta=\kappa e^{i\gamma}$. We  \textbf{assume} $\alpha,\theta,\gamma\in\Q$.
\begin{lemma} \label{lem:specialcase}
If $\theta \in (0,\pi)$ then for $\ds t_{0} \sett \left\lceil \frac{-\frac\pi2-\alpha+2\pi\cdot \left\lceil \frac{\alpha}{2\pi}+\frac14 \right\rceil}\theta\right\rceil$ we have $g(t_{0})>0$. Consider $\ds\set{x_m\sett\frac{\frac{\pi}{2} - \alpha-\gamma + m\pi}\theta\st m\in \Z}$. Moreover for $\ds m^{*} \sett 1+\left\lceil 
\frac{\theta\log_{r}\pa{\frac{g(t_{0})}{\abs{\sin \gamma} d}}+\alpha+\gamma-\frac\pi2}{\pi} 
\right\rceil$, we have $\ds \sup_{t\in \N} g(t) = \sup_{1\le t\le x_{m^{*}}}g(t)$. 
\end{lemma}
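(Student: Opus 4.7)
The plan is to write $g(t) = r^t d \cos(t\theta + \alpha)$, which follows from $a_1 + i a_2 = d e^{i\alpha}$ and the given expression for $g$. The lemma splits into (i) showing the prescribed $t_0$ satisfies $g(t_0) > 0$, and (ii) establishing that $x_{m^*}$ is a cutoff beyond which $g$ is dominated by $g(t_0)$. For (i), let $K \sett \lceil \alpha/(2\pi) + 1/4 \rceil$ so that $2\pi K - \alpha \ge \pi/2$, making the argument of the outer ceiling nonnegative. Taking the ceiling, $t_0 \theta \in [-\pi/2 - \alpha + 2\pi K, \; -\pi/2 - \alpha + 2\pi K + \theta)$, and since $\theta < \pi$ we conclude $t_0\theta + \alpha \in [-\pi/2 + 2\pi K, \; \pi/2 + 2\pi K)$, an interval on which $\cos$ is strictly positive, so $g(t_0) > 0$.

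For (ii), the key computation is the derivative: differentiating $r^t\cos(t\theta+\alpha)$ and then using $\ln r + i\theta = \kappa e^{i\gamma}$ to combine the two oscillatory terms yields $g'(t) = d r^t \kappa \cos(t\theta + \alpha + \gamma)$, whose zeros are precisely $\set{x_m}_{m \in \Z}$. Between any two consecutive critical points $g$ is monotone (since $g'$ changes sign only at the $x_m$), and a direct evaluation gives
\begin{equation*}
g(x_m) = r^{x_m} d \cos\pa{\tfrac{\pi}{2} - \gamma + m\pi} = (-1)^m r^{x_m} d \sin\gamma,
\end{equation*}
so $\abs{g(x_m)} = r^{x_m} d \abs{\sin\gamma}$. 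Combining monotonicity with $r < 1$ and the fact that $x_m$ is increasing in $m$, for every real $t \ge x_{m^*}$ there exists $m \ge m^*$ with $t \in [x_m, x_{m+1}]$, and consequently $\abs{g(t)} \le \max\set{\abs{g(x_m)}, \abs{g(x_{m+1})}} \le r^{x_{m^*}} d \abs{\sin\gamma}$.

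It remains to check that the defining formula for $m^*$ enforces both $r^{x_{m^*}} d \abs{\sin\gamma} \le g(t_0)$ and $t_0 \le x_{m^*}$. Solving the first inequality by applying $\log_r$ (decreasing, since $r<1$) and substituting $x_{m^*} = (\pi/2 - \alpha - \gamma + m^*\pi)/\theta$ rearranges to $m^* \ge \frac{\theta \log_r(g(t_0)/(\abs{\sin\gamma} d)) + \alpha + \gamma - \pi/2}{\pi}$, which is exactly what the stated ceiling plus the extra $+1$ (absorbing rounding slack) enforces. Since $g(t_0) \le r^{t_0} d$ and $\abs{\sin\gamma} \le 1$, a short computation confirms $x_{m^*} \ge t_0$, so $t_0$ lies in $[1, x_{m^*}]$. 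Hence $\sup_{1 \le t \le x_{m^*}} g(t) \ge g(t_0) \ge \abs{g(t)}$ for every integer $t > x_{m^*}$, and the two suprema coincide. The main subtlety is in step (ii): one must exploit monotonicity of $g$ between consecutive $x_m$ to upgrade the critical-point bound into a uniform bound for \emph{all} real $t \ge x_{m^*}$, not just at the $x_m$ themselves.
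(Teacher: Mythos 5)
Your route is essentially the paper's: write $g(t)=d\,r^t\cos(t\theta+\alpha)$, compute $g'(t)=d\kappa r^t\cos(t\theta+\alpha+\gamma)$, identify the critical points $x_m$, evaluate $g(x_m)=(-1)^m d\,r^{x_m}\sin\gamma$, get $g(t_0)>0$ from placing $t_0\theta+\alpha$ in $[2\pi K-\tfrac{\pi}{2},\,2\pi K+\tfrac{\pi}{2})$, and choose $m^*$ so that $d\,r^{x_{m^*}}|\sin\gamma|<g(t_0)$. In fact your step (ii) is cleaner than the paper's at one point: you bound $|g(t)|$ for all real $t\ge x_{m^*}$ by monotonicity of $g$ between consecutive critical points, whereas the paper's inline justification ($r^{x_k}\cdot 1=r^{x_k}|\cos(x_k\theta+\alpha)|$) only holds when $|\sin\gamma|=1$.

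Two small repairs are needed. First, in step (i) your interval argument is not quite airtight: $\cos$ vanishes at the left endpoint $2\pi K-\tfrac{\pi}{2}$, and if $(-\tfrac{\pi}{2}-\alpha+2\pi K)/\theta$ happened to be an integer you would only get $g(t_0)\ge 0$. You must invoke the standing assumption $\alpha,\theta\in\Q$ (as the paper does): $t_0\theta+\alpha$ is rational while $\pi(2K-\tfrac12)$ is not, so equality is impossible and $\cos(t_0\theta+\alpha)>0$ strictly. Second, your justification that $x_{m^*}\ge t_0$ does not follow from the inequalities you cite: $x_{m^*}>\log_r\bigl(\tfrac{g(t_0)}{d|\sin\gamma|}\bigr)$ implies $x_{m^*}\ge t_0$ only if $\cos(t_0\theta+\alpha)\le|\sin\gamma|$, which need not hold (e.g.\ when $\gamma$ is near $\pi$). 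The fact is still true, but the correct one-line argument is by contradiction with the bound you just proved: if $t_0\ge x_{m^*}$ then $g(t_0)\le d\,r^{x_{m^*}}|\sin\gamma|<g(t_0)$, which is absurd; hence $t_0<x_{m^*}$ and $g(t_0)$ is captured by $\sup_{1\le t\le x_{m^*}}g(t)$. With these two fixes the proof is complete and matches the paper's argument.
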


%Based on the above,

\begin{proof}{\informsproof}
For non-real complex eigenvalues, $J=r\begin{bmatrix}\cost&-\sint\\\sint&\cost\end{bmatrix}$. Then, $J^t=r^t\begin{bmatrix}\cos t\theta&-\sin t\theta\\\sin t\theta&\cos t\theta\end{bmatrix}$ whence $\inner{\pmb c}{M^t\pmb x} = r^t(a_1\cos t\theta-a_2\sin t\theta)$ where $a_1,a_2$ are constants completely determined by the inputs $\pmb c,\pmb x,M$. 

Let's try to see where $r^t(a_1\cos t\theta-a_2\sin t\theta)$ maximizes. 
First extend to a real function $g(x)\sett r^x(a_1\cos x\theta-a_2\sin x\theta) = \Re(e^{ix\theta}(a_1+ia_2))$ so that $g'(x) = r^{x}\Re\pa{e^{i x\theta} \cdot (\ln r + i\theta) \cdot (a_{1}+ia_{2})}$ where $\Re(\cdot)$ denotes the real part of complex numbers. 

Let's name the above complex numbers as $u \sett \ln r + i\theta = \kappa e^{i\gamma}$ and $a\sett a_{1}+ia_{2} = d e^{i\alpha}$ where $\kappa,d > 0$ and $\alpha,\gamma\in\lp0,2\pi\rp$. But $r\in (0,1), \theta\neq\pi$ means that $\gamma \in \pa{\frac\pi2,\frac{3\pi}2}$ and $\sin\gamma\ne \frac\pi \kappa$. With this we have $g(x) = r^x\cos(x\theta+\alpha)$. We assumed $\alpha,\theta\in\Q$.

Now we find all those $x$ such that $g'(x)=0$ and $g(x)>0$ (such an $x$ automatically satisfies $g''(x)\le 0$). Let's first focus on the solutions of $g'(x)=0$, that is, all $x$ satisfying $\Re\pa{e^{i\pa{x\theta + \alpha + \gamma}}}  = \frac{1}{\kappa d}\Re\pa{e^{i x\theta} ua} = 0$. 
The solutions are $\ds\set{x_m\sett\frac{\frac{\pi}{2} - \alpha-\gamma + m\pi}\theta\st m\in \Z}$.  
They satisfy $$g(x_{m}) = (-1)^{m} r^{x_{m}}  d\sin\gamma%= \pa{-r^{\frac\pi\theta}}^{m}\cdot \pa{r^{\frac{\frac\pi2-\alpha-\gamma}{\theta}}d\cdot\theta}
.$$ 
Thus $\set{\abs{g(x_{m})}}_{m\ge 0}$ is a strictly decreasing sequence in terms of $m$. In fact, the local maxima over the positive reals are exactly the points $\set{x_{2m}}_{m\in\N_{0}}$. The issue in finding $\ds\sup_{t\in \N}g(t)$ is that these $x_{m}$'s are merely real numbers. 

However, if we could find some $t_{0}\in \N$ such that $g(t_{0})>0$, taking $\epsilon=g(t_{0})$ gives an $N$ such that $g(x_{m})<\epsilon\fa m > N$, whence the required supremum occurs only over $\N\cap[1,x_{N-1}]$.  Now take $t_0$ to be as in the statement of the lemma.

For brevity let's call $p\sett \frac{\alpha}{2\pi}+\frac14$ and $q\sett\lceil p\rceil$ (this choice of $q$ makes $t_0>0$). Clearly $t_{0}\theta +\alpha \ge 2\pi q-\frac\pi2$. It was earlier assumed that $\alpha,\theta\in\Q$ so the last inequality is strict. This means that $\ds \frac{-\frac{\pi}{2}-\alpha+2\pi q}{\theta}\notin\Z$.
\\The following thus holds:
\begin{align*}
\frac{-\frac\pi2-\alpha+2\pi q}{\theta}< t_{0} &\le \frac{-\frac\pi2-\alpha+2\pi q}\theta + 1\\
\implies 2\pi q-\frac \pi2 < t_{0}\theta + \alpha &\le -\frac{\pi}{2} + 2\pi q + \theta \stackrel{[\because~\theta<\pi]}{<} 2\pi q + \frac{\pi}{2}.
\end{align*}
It thus stands that $\Re\pa{e^{it_{0}\theta}\cdot a} = d\cos(t_{0}\theta+\alpha)>0$ which has the same sign as $g(t_{0})$.

For the latter part, notice that if $\ds m \ge m^{*}$ then $x_m > \log_r\left(\frac{g(t_0)}{d\abs{\sin\gamma}}\right)$ whence $\ln\abs{g(x_{m^*})} \le \ln \abs{g(x_{m})} = x_{m} \underbrace{\ln r}_{<0} + \ln(d\abs{\sin\gamma}) < \log_r\left(\frac{g(t_0)}{d\abs{\sin\gamma}}\right) \ln r + \ln(d\abs{\sin\gamma}) = \ln\pa{g(t_{0})}$. So $t > x_{m^{*}}$ implies that $g(t) \le \abs{g(x_{m^{*}})} < g(t_{0})$. The first inequality here is because if $x\ge x_{k}$ is a real number then $\abs{g(x)} = r^x \abs{\cos(x\theta+\alpha)} \le r^{x}  \le r^{x_{k}} \cdot 1 = r^{x_{k}} \cdot \abs{\cos(x_{k}\theta+\alpha)} =  \abs{g(x_k)}$.
\informsendofproof \end{proof}

\section{Additional Experiments}
\label{sec:appendixexperiments}

\subsection{Computational Advantage of Converting Markov chain to GAS}

By reducing the matrix $M$ to $\overline M$, we reduce its size from 
$n \times n$ to $(n-1)\times(n-1)$
 while keeping all the essential information intact. Asymptotically, this is of the same order but practically, it has consequences in compute times. To show a practical advantage of using 
 $\overline M$ instead of $M$, we ran the following experiment. Set parameters $k=5000, n=75, T=10^{10}$. We generate a matrix $M$
 of size $n \times n$ randomly $k$
 times, reduce it to $\overline M = AMB$
 (with a smaller size as in Section~\ref{sec:relation} of our paper); then compute the matrix powers $M^T$
 and $\overline M^T$. We look at each individual run and record what percentage of those $k$ runs has a lower running time for 
 $\overline M$
 and also look at their respective total running times. Let the total times (for $k$
 runs) be $t_1,t_2$ 
 for $M, \overline M$
 respectively. We repeat this entire experiment 5 times independently and then report these percentages and the percentage decrease from $t_1$
 to reach $t_2$, namely $x = \frac{t_1 - t_2}{t_1}*100$. In other words, computation of 
 $\overline M^T$
 takes $x$\% less time than the computation of 
 $M^T$. All percentages are rounded to the nearest integer. This is shown in the table below.

 \begin{table}[t!]
     \centering
     \begin{tabular}{ccc}
     \toprule
    Experiment run	& \% won &	\% reduction in time \\
    \midrule
    1	& 79 & 25\\
2	& 77 & 31 \\
3	& 75 & 44 \\
4	& 75 &  31 \\
5	& 77 & 37 \\
\bottomrule
     \end{tabular}
     \label{tab:advantage}
 \end{table}
\end{document}